\newcommand{\R}{\mathbb{{R}}}
\newcommand{\N}{\mathbb{{N}}}
\newcommand{\C}{\mathbb{{C}}}
\newcommand{\M}{\mathbb{{M}}}
\newtheorem{theorem}{Theorem}
\newtheorem{corollary}[theorem]{Corollary}
\newtheorem{lemma}[theorem]{Lemma}
\newtheorem{example}[theorem]{\it Example}
\newtheorem{proposition}[theorem]{Proposition}
\newtheorem{definition}[theorem]{Definition}
\newtheorem{remark}[theorem]{\it Remark}
\begin{document}
\title[A Subordination Principle on Regularized Resolvent Families]
{A Subordination Principle on  Wright Functions  and Regularized Resolvent Families}

\author{Luciano Abadias}
%    Address of record for the research reported here
\address{Departamento de Matem\'aticas, Instituto Universitario de Matem\'aticas y Aplicaciones, Universidad de Zaragoza, 50009 Zaragoza, Spain.}
%    Current address
%\curraddr{}

\email{labadias@unizar.es}
%    \thanks will become a 1st page footnote.

 %   Information for first author
\author{Pedro J. Miana}
\address{Departamento de Matem\'aticas, Instituto Universitario de Matem\'aticas y Aplicaciones, Universidad de Zaragoza, 50009 Zaragoza, Spain.}
\email{pjmiana@unizar.es}
 %\thanks{The second author is  partially supported by ????}

\thanks{Authors have been partially supported  by Project MTM2013-42105-P, DGI-FEDER, of the MCYTS; Project E-64, D.G. Arag\'on, and  Project UZCUD2014-CIE-09, Universidad de Zaragoza.}

% General info
\subjclass[2010]{Primarly, 33E12, 47D06; Secondary,  35R11, 47D99.}

\keywords{Subordination principle in the Bochner sense; Mittag-Leffer and Wright functions; regularized resolvent families; $C_0$-semigroups and cosine functions.}

\begin{abstract}We obtain a vector-valued subordination principle for $(g_{\alpha},g_{\beta})$-regularized resolvent families which unified and  improves various previous results in the literature.  As a consequence we establish new relations between  solutions of different fractional Cauchy problems.
To do that, we consider   scaled Wright functions which are related to Mittag-Leffler functions, the fractional calculus and stable L\'{e}vy processes. We study some interesting properties of these functions such as subordination (in the sense of Bochner), convolution properties, and their Laplace transforms. Finally we present some examples where we apply these results.
\end{abstract}
\date{}

\maketitle

\section{Introduction}

\setcounter{theorem}{0}
\setcounter{equation}{0}
A function $f:(0,\infty)\to \R$ is  a Bernstein function if $f$ is of class $C^\infty$, $f(\lambda)\ge 0 $ for all $\lambda>0$ and $$(-1)^{n-1}f^{(n)}(\lambda)\ge 0,  \qquad  \lambda>0, \,\, n\in \N.
$$
The celebrated Bochner subordination theorem characterizes Bernstein functions: given $f$ a Bernstein function there exists a unique convolution semigroup of sub-probability measures $(\mu_t)_{t>0}$ on $[0, \infty)$ such that
\begin{equation}\label{Berstein}
e^{-tf(\lambda)}=\int_0^\infty e^{-\lambda s}d\mu_t(s), \qquad \Re\lambda>0.
\end{equation}
Conversely, given a convolution semigroup of sub-probability measures  $(\mu_t)_{t>0}$ on $[0, \infty)$, then there exists a unique Bernstein function $f$ such that (\ref{Berstein}) holds true, see for example \cite[Theorem 5.2]{Berstein}. The original subordination principle for stochastic processes  in connection with diffusion equations and semigroups  was introduced  in \cite{Bochner}. In \cite[Chapters 4.3, 4.4]{Bochnerbook}  a detailed study of stochastic processes, their transition semigroups, generators and subordination results are developed.

Now let $A$ be a densely defined closed linear operator on a Banach space $X$ which generates a  $C_0$-contraction semigroup $(T(t))_{t>0}\subset {\mathcal B}(X)$. Then the solution of the first order abstract Cauchy problem
$$
\left\{\begin{array}{ll}
u'(t)=Au(t),&t>0, \\
u(0)=x,&
\end{array} \right.
$$
is given by $u(t)=T(t)x$ for $t>0$. Now, suppose that $(\mu_t)_{t>0}$ is a vaguely continuous convolution semigroup of sub-probability measures on $[0, \infty)$ with the corresponding Bernstein function $f$. Then the Bochner integral
\begin{equation*}
T^f(t)x:=\int_0^\infty T(s)x\,d\mu_t(s), \qquad x\in X, \quad t>0,
\end{equation*}
defines again a  $C_0$-contraction semigroup on $X$ (\cite[Proposition 12.1]{Berstein}). Then the semigroup $(T^f(t))_{t>0}$
is called subordinate (in line with Bochner) to the semigroup
$(T(t))_{t>0}$ with respect to the Bernstein function $f$. In particular, given $0<\alpha<1$ and $d\mu_t(s)= f_{t,\alpha}(s)ds$ (where $f_{t,\alpha}$ are the stable L\'{e}vy processes, see (\ref{Levi})) then
\begin{equation}\label{yosida}
{T}^{(\alpha)}(t)x:=
\displaystyle\int_0^{\infty}f_{t,\alpha}(s)T(s)x\,ds, \qquad x\in X,\quad t>0,
\end{equation}
is an analytic semigroup generated by $-(-A)^{\alpha},$ the fractional powers of the generator $A$
according to Balakrishnan. For more details see \cite[Chapter IX]{Yosida}.

Other subordination formulae allow to define new families of operators from some previous ones by integration. Let $A$ be the generator of a cosine function $(C(t))_{t>0}$ on a Banach space $X$ (see definition in \cite[Section 3.14]{ABHN}). Then $A$ generates a holomorphic $C_0$-semigroup $(T(z))_{z\in \C_+}$ of angle ${\pi \over 2}$, given by
\begin{equation}\label{cosine}
T(z)x={1\over \sqrt{\pi z}}\int_0^\infty e^{-s^2\over 4z}C(s)x\,ds, \qquad x\in X, \quad z\in \C_+,
\end{equation}
(\cite[Theorem 3.14.17]{ABHN}). Remember that the solution of the second order Cauchy problem
$$
\left\{\begin{array}{ll}
u''(t)=Au(t),&t>0, \\
u(0)=x,&\\
u'(0)=0,&
\end{array} \right.
$$
is $u(t)=C(t)x$ for $t>0$ (\cite[Section 3.14]{ABHN}).

In \cite{Lizama}, a two-kernel dependent family of strong continuous operators
defined in a Banach space is introduced. This family allows us to consider in a unified treatment the
notions of, among others, $C_0$-semigroups of operators, cosine families, $n$-times
integrated semigroups, resolvent families and $k$-generalized solutions. Let $a\in L^1_{loc}(\R_+)$ and $k\in C(\R_+)$. The family $\{S_{a,k}(t)\}_{t> 0}\subset\mathcal{B}(X)$ is a $(a,k)$-regularized resolvent family generated by $A$ if the following conditions are fulfilled:  $S_{a,k}(t)$ is strongly continuous for $t> 0$ and
$\displaystyle{S_{a,k}(0)x}={k(0)x}$ for all $x\in X;$  $S_{a,k}(t)A\subset AS_{a,k}(t)$, i.e.,  $S_{a,k}(t)A(x)\subset AS_{a,k}(t)x$ for $x\in D(A)$ and  $t>0;$ and
 $$S_{a,k}(t)x=k(t)x+A\left(\int_0^t{a(t-s)}S_{a,k}(s)x\,ds\right),\qquad x\in X, \quad t>0 ,$$
see \cite[Definition 2.1]{Lizama}. In the case $k(t)=1$, we obtain the resolvent families which are treated in detail in \cite{pruss}; for $k(t)=a(t)=1$, this family of operators is a  $C_0$-semigroup; and we also retrieve cosine functions  for $k(t)=1$ and $a(t)=t$, $(t>0)$.

Subordination theorems for $(a,k)$-regularized resolvents have been considered in some different works. In \cite[Section I.4]{pruss},  the use of the theory of Bernstein functions, completely positive functions and the Laplace transform allow to show some subordination results for $(a, 1)$-regularized resolvents (\cite[Theorem 4.1, Corollary 4.4 and Corollary 4.5]{pruss}). A nice subordination theorem for $m$-times integrated semigroup is proved in \cite[Theorem 3.7]{Lizama}. In the case of $({t^{\alpha-1}\over \Gamma(\alpha)},1)$-regularized functions, this subordination theorem is improved in  \cite[Theorem 3.1]{Bazhlepaper} and \cite[Theorem 3.1]{Bajlekova} and an integral representation, similar to formula (\ref{cosine}), is also proved involving Wright functions. In \cite[Theorem 3.1]{Li}, using holomorphic functional calculus, the authors prove a subordination result for $(\frac{t^{\alpha-1}}{\Gamma(\alpha)},1)$-regularized resolvent families generated by fractional powers of closed operators, which extends both \cite[Chapter IX, section 11, Theorem 2]{Yosida} and \cite[Theorem 3.1]{Bazhlepaper}. Finally, in \cite[Theorem 2.8]{Kostic} a subordination principle for $(a,k)$-regularized resolvents, inspired in the original proof of \cite[Theorem 4.1]{pruss}, is shown. In all this results, note that the subordination integral formula is only present in \cite[Theorem 3.1]{Bazhlepaper} and \cite[Theorem 3.1]{Li}.

The main aim of this paper is to obtain subordination integral formulae to  $({t^{\alpha-1}\over \Gamma(\alpha)},{t^{\beta-1}\over \Gamma(\beta)})$-regularized resolvents (Theorem \ref{main}). To achieve this, we present a detailed presentation of Wright and Mittag-Leffler functions in Section 2, which includes some  basic results and known connections of these functions and fractional differential equations.

In Section 3, we introduce a new family of bi-parameter special functions $\psi_{\alpha, \beta}$ in two variables defined by scaling Wright functions (Definition \ref{psi}). This family of functions $\psi_{\alpha, \beta}$ plays a fundamental role in the subordination principle for $({t^{\alpha-1}\over \Gamma(\alpha)},{t^{\beta-1}\over \Gamma(\beta)})$-regularized resolvent families, see formula (\ref{formulasub}). Moreover, these functions satisfy a nice subordination formula,  Theorem \ref{bonito}, which extends some known results for  Wright $\M$-function and stable L\'{e}vy processes, see Remark  \ref{cases}. In fact the algebraic nature (for convolution products) of these functions   $\psi_{\alpha, \beta}$ is shown in Proposition \ref{convo} and \ref{integrales}.

In Section 4, we consider $({t^{\alpha-1}\over \Gamma(\alpha)},{t^{\beta-1}\over \Gamma(\beta)})$-regularized resolvents in abstract Banach spaces. We prove spectral inclusions in Theorem \ref{spectral}, which extends the spectral inclusion result for $({t^{\alpha-1}\over \Gamma(\alpha)},1)$-regularized resolvents, see \cite[Theorem 3.2]{Li-Zheng};  the subordination principle, Theorem \ref{main}, and some consequences in Remark \ref{others} and Corollary \ref{corollary}.

Finally, in Section 5 we present some  comments, concrete examples and applications to fractional Cauchy problems which illustrate the main results of this paper .

\noindent {\bf Notation.} Let $\R_+:=[0,\infty),$ $\C_+:=\{z\in\C\,:\, \mathfrak{R}z>0\},$ and $L^1(\R_+)$ be the Lebesgue Banach algebra of integrable function on $\R_+$ with the usual convolution product $$f*g(t) =\displaystyle\int_0^t f(t-s)g(s)\,ds,\qquad f,g\in L^1(\R_+),\ t\geq 0.$$
The usual Laplace transform  of a function $f$, $\hat f$, is defined by
$$
\hat f(\lambda)=\int_0^\infty f(t)e^{-\lambda t}dt,\qquad \lambda \in \C_+,
$$
for $f\in L^1(\R_+).$ Let $\gamma>0,$ we denote by $g_{\gamma}(t):=\frac{t^{\gamma-1}}{\Gamma(\gamma)},$ $t>0,$ and $\widehat{g_{\gamma}}=\displaystyle{\frac{1}{\lambda^{\gamma}}}$ for $\lambda\in \C_+.$

The set of continuous functions on $\R_+$ such that $\displaystyle\lim_{t\to\infty}|f(t)|=0$ is denoted by $C_0(\R_+),$ and the set of holomorphic functions on $\C_+$ such that $\displaystyle\lim_{|z|\to\infty}|f(z)|=0$ by ${\mathcal H}_0(\C^+).$ We denote by $X$ an abstract Banach space, ${\mathcal B}(X)$ the set of linear and bounded operators on the Banach space $X$, and $C_{c}^{(\infty)}(\R_+; X)$ the set of functions of compact support and infinitely differentiable on $\R_+$ into $X$.

\section{Mittag-Leffler and Wright functions}

\setcounter{theorem}{0}
\setcounter{equation}{0}

In this section we present definitions and basic properties of Mittag-Leffler and Wright functions. The algebraic structure of those functions have  been partially considered in \cite{Peng-Li-10} and formulae (\ref{eq2.3}) and (\ref{eq2.4}) seems to be new.

The Mittag-Leffler functions are defined by $$E_{\alpha,\beta}(z):=\displaystyle\sum_{n=0}^{\infty}\frac{z^n}{\Gamma(\alpha n+\beta)}, \qquad \alpha,\,\beta>0,\,z\in\C.$$ We write $E_{\alpha}(z):=E_{\alpha,1}(z).$ The Mittag-Leffler functions satisfy the following fractional differential problems $$_C D_t^{\alpha}E_{\alpha}(\omega t^{\alpha})=\omega E_{\alpha}(\omega t^{\alpha}),$$ and $$_R D_t^{\alpha}\biggl(t^{\alpha-1}E_{\alpha,\alpha}(\omega t^{\alpha})\biggr)=\omega t^{\alpha-1}E_{\alpha,\alpha}(\omega t^{\alpha}),$$ for $0<\alpha<1,$ under certain initial conditions, where $_C D_t^{\alpha}$ and $_R D_t^{\alpha}$ denote the Caputo and Riemann-Liouville fractional derivatives of order $\alpha$ respectively, see section 5 and \cite{Mainardi, Miller}. Their Laplace transform is
\begin{equation}\label{mittag}\int_0^{\infty}e^{-\lambda t}t^{\beta-1}E_{\alpha,\beta}(\omega t^{\alpha})\,dt=\frac{\lambda^{\alpha-\beta}}{\lambda^{\alpha}-\omega}, \qquad \mathfrak{R}\lambda>\omega^{\frac{1}{\alpha}},\,\omega>0.\end{equation} For more details see \cite[Section 1.3]{Bajlekova}.

Recently the next algebraic property has been proved
  \begin{equation*}
 \left( \int_t^{t+s}  - \int_0^s\right) \frac{E_\alpha(\omega r^\alpha)}{(t+s-r)^{\alpha}}\, dr = \alpha \int_0^t \int_0^s \frac{E_\alpha(\omega r^\alpha_1)E_\alpha(\omega r^\alpha_2)}{(t+s-r_1-r_2)^{1+\alpha}}\,dr_1\, dr_2, \qquad t,s\ge 0,
 \end{equation*}
 for $0<\alpha <1$ and $\omega \in \C$, see  \cite[Theorem 1]{Peng-Li-10}. In fact, a similar identity holds for generalized Mittag-Leffer function $E_{\alpha, \beta}$ with $0<\alpha<1$, $\beta>\alpha$ and
 \begin{equation}\begin{array}{l}\label{eq2.3}
 \displaystyle\left( \int_t^{t+s}  - \int_0^s\right) {(t+s-r)^{\beta-\alpha-1}\over \Gamma(\beta- \alpha)}r^{\beta-1}E_{\alpha, \beta}(\omega r^\alpha)\, dr \\ \\
 \displaystyle = {\alpha\over \Gamma(1-\alpha)} \int_0^t \int_0^s \frac{r_1^{\beta-1}E_{\alpha,\beta}(\omega r^\alpha_1)r_2^{\beta-1}E_{\alpha,\beta}(\omega r^\alpha_2)}{(t+s-r_1-r_2)^{1+\alpha}}\,dr_1\, dr_2,
 \end{array}\end{equation}
for $t,s\ge 0$ and $\omega>0$. The proof of this result is a straightforward consequence of the \cite[Theorem 5]{Li-Sun1}. In the case $\beta=\alpha$ for $0<\alpha<1,$ the algebraic property is \begin{equation}\label{eq2.4}
 (t+s)^{\alpha-1}E_{\alpha, \alpha}(\omega (t+s)^\alpha)
 = {\alpha\over \Gamma(1-\alpha)} \int_0^t \int_0^s \frac{r_1^{\alpha-1}E_{\alpha,\alpha}(\omega r^\alpha_1)r_2^{\alpha-1}E_{\alpha,\alpha}(\omega r^\alpha_2)}{(t+s-r_1-r_2)^{1+\alpha}}\,dr_1\, dr_2,
 \end{equation}
which is a direct consequence of Theorem 2.1 and Theorem 2.2 of \cite{Mei-Pe-Zh13}.

The Wright function, that we denote by $W_{\lambda,\mu},$ was introduced and investigated by E. Maitland Wright in a series of notes starting from 1933 in the framework of the theory of partitions, see \cite{Wright}. This entire function is defined by the series representation, convergent in the whole complex plane, $$W_{\lambda,\mu}(z):=\displaystyle\sum_{n=0}^{\infty}\frac{z^n}{n!\Gamma(\lambda n+\mu)},\qquad \lambda>-1,\ \mu\in\C.$$

The equivalence between the above series and the following integral representations of $W_{\lambda,\mu}$ is easily proven by using the Hankel
formula for the Gamma function, $$W_{\lambda,\mu}(z)=\frac{1}{2\pi i}\int_{Ha}\sigma^{-\mu}e^{\sigma+z\sigma^{-\lambda}}\,d\sigma,\qquad \lambda>-1,\ \mu\ge 0,\, z\in \C,$$ where $Ha$ denotes the Hankel path defined as a contour that begins at $t = -\infty- ia$ $(a > 0),$ encircles the branch cut that lies along the negative real axis, and ends up at $t = -\infty+ ib$ $(b > 0),$ for more details see \cite[Appendix F]{Mainardi}. It is clear that $$\frac{d}{dz}W_{\lambda,\mu}(z)=W_{\lambda,\lambda+\mu}(z), \qquad z\in \C.$$

In addition, as discussed below, the following special cases are of considerable interest: $$M_{\alpha}(z):=W_{-\alpha,1-\alpha}(-z), \qquad F_{\alpha}(z):=W_{-\alpha,0}(-z),\qquad 0<\alpha<1,\, z\in\C,$$ interrelated through $$F_{\alpha}(z)=\alpha z M_{\alpha}(z), \qquad z\in \C.$$

%?`?`?`?`The function $M_{\alpha}$ appears in \cite[(1.30)]{Bajlekova} related to a particular case of the subordination principle.???

The Wright $\M$-function in two variables $\M$ is defined by
$$ \M_\alpha(s,t): =t^{-\alpha}M_{\alpha}(st^{-\alpha}), \qquad t>0, s\in \R.$$ This function has been studied, for example, in \cite[p. 257]{Mainardi} and  \cite[Section 6]{MaiPaGo}; a subordination formula for time fractional diffusion process is given in \cite[Formula (6.3)]{MaiPaGo} and \cite[(F.55)]{Mainardi}: for $ \eta, \beta\in (0,1)$, the following subordination formula holds true for $0<s, t$,
\begin{equation}\label{Mainardi}
\M_{\eta\beta}(s,t)= \int_0^\infty \M_\eta(s,\tau)\M_\beta(\tau,t)d\tau, \qquad t,s>0.
\end{equation}
This subordination formula had previously appeared  in \cite[Formula (3.28)]{Bajlekova}.

The deep connection between fractional differential equations (in space and in time) and Wright-type functions $(W_{\lambda,\mu},\, M_{\alpha}, \,F_{\alpha},\, \M_\alpha...)$ has been studied in detail in  \cite{MaiLuPa, MaiPaGo, Mainardi}.

It is known that \begin{equation}\label{LTWF}E_{\alpha,\alpha+\beta}(z)=\int_0^{\infty}e^{zt}\,W_{-\alpha,\beta}(-t)\,dt,\qquad z\in\C,\,0<\alpha<1,\,\beta\geq 0,\end{equation} that is, $E_{\alpha,\alpha+\beta}(-(\cdot))$ is the Laplace transform of $W_{-\alpha,\beta}(-(\cdot))$ in the whole complex plane, see \cite[Formula (F.25)]{Mainardi}. Then, observe that for $0<\alpha<1$ $$E_{\alpha}(z)=\displaystyle\int_0^{\infty}e^{zt}M_{\alpha}(t)\,dt,\qquad E_{\alpha,\alpha}(z)=\displaystyle\int_0^{\infty}e^{zt}F_{\alpha}(t)\,dt,\qquad z\in \C,$$ where both functions are related to the solutions of the fractional differential problems mentioned above.

Nice connections between Mittag-Leffler functions and Wright functions are obtained by the Laplace  transform, see formula (\ref{LTWF}) and
$$
\int_0^\infty e^{-zt}W_{\lambda, \mu}(\pm r)dr={1\over z}E_{\lambda,\mu}\left(\pm{1\over z}\right), \qquad \vert z\vert >0,\quad \lambda >0,
$$
(\cite[Formula (F.22)]{Mainardi}). In the next proposition, we present some interesting properties of Wright functions.  The next result extends the study which was done in \cite[Chapter 1, p.14]{Bajlekova} for the case  $W_{-\alpha, 1-\alpha}$ with $0<\alpha<1$.

\begin{proposition}\label{W} Let $0<\alpha<1$ and $\beta\geq 0.$ Then the following properties hold.
\begin{itemize}
\item[(i)] $\displaystyle\int_0^{\infty}{t^{\eta-1}\over\Gamma(\eta)}W_{-\alpha,\beta}(-t)\,dt=\frac{1}{\Gamma(\alpha\eta+\beta)},\qquad \eta>0.$
\item[(ii)] $W_{-\alpha,\beta}(-t)\geq 0,$ for $t>0.$
\end{itemize}
\end{proposition}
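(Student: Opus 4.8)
The plan is to establish part (i) by a direct term-by-term computation using the series representation of the Wright function, and then to use part (i) together with the Laplace-transform identity (\ref{LTWF}) to deduce the nonnegativity in part (ii). For (i), I would substitute the series $W_{-\alpha,\beta}(-t)=\sum_{n\ge 0}\frac{(-t)^n}{n!\,\Gamma(-\alpha n+\beta)}$ into the integral $\int_0^\infty g_\eta(t)W_{-\alpha,\beta}(-t)\,dt$. Interchanging sum and integral (justified by absolute convergence, since $W_{-\alpha,\beta}$ is entire of suitable order and $g_\eta\in L^1_{loc}$ with the relevant decay — this is the kind of routine estimate I would not grind through), each term becomes $\frac{(-1)^n}{n!\,\Gamma(-\alpha n+\beta)}\int_0^\infty \frac{t^{\eta-1+n}}{\Gamma(\eta)}\,dt$, which is not literally convergent, so instead I would introduce the Laplace transform: compute $\int_0^\infty e^{-\lambda t}g_\eta(t)W_{-\alpha,\beta}(-t)\,dt$ for $\lambda>0$. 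By (\ref{LTWF}) and the known scaling relations, the Laplace transform of $W_{-\alpha,\beta}(-\cdot)$ is $E_{\alpha,\alpha+\beta}$ evaluated appropriately; more directly, one has $\int_0^\infty e^{-\lambda t}W_{-\alpha,\beta}(-t)\,dt = \lambda^{\alpha-\beta-1}$ (this follows from applying the formula $\int_0^\infty e^{-\lambda t}t^{\eta-1}W_{-\alpha,\beta}(-t)\,dt$ with $\eta$ chosen, or from $E_{\alpha,\alpha+\beta}$ at the boundary). Actually the cleanest route: for $\eta>0$, convolving $g_\eta$ with $W_{-\alpha,\beta}(-\cdot)$ and taking Laplace transforms gives $\lambda^{-\eta}\cdot\widehat{W_{-\alpha,\beta}(-\cdot)}(\lambda)$, and uniqueness of Laplace transforms pins down the value.

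The slicker and I think intended argument for (i) is to interchange sum and integral at the level of the identity itself by a scaling/Mellin argument. Writing $I(\eta):=\int_0^\infty g_\eta(t)W_{-\alpha,\beta}(-t)\,dt$, I would use $\int_0^\infty g_\eta(t)e^{-t}\,dt = 1$-type normalizations and the Hankel-integral representation $W_{-\alpha,\beta}(-t)=\frac{1}{2\pi i}\int_{Ha}\sigma^{-\beta}e^{\sigma - t\sigma^{\alpha}}\,d\sigma$. Substituting this in and exchanging the order of integration (Fubini on the Hankel contour, legitimate by the exponential decay of $e^{\sigma}$ along $Ha$), the inner integral $\int_0^\infty \frac{t^{\eta-1}}{\Gamma(\eta)}e^{-t\sigma^\alpha}\,dt = \sigma^{-\alpha\eta}$, so $I(\eta)=\frac{1}{2\pi i}\int_{Ha}\sigma^{-\beta-\alpha\eta}e^{\sigma}\,d\sigma$, which by the Hankel formula for $1/\Gamma$ equals $\frac{1}{\Gamma(\alpha\eta+\beta)}$. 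This is clean and avoids convergence subtleties of the power series.

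For (ii), the idea is that (i) says the moments $\int_0^\infty t^{\eta-1}W_{-\alpha,\beta}(-t)\,dt$ are all strictly positive (they equal $\Gamma(\eta)/\Gamma(\alpha\eta+\beta)>0$ for every $\eta>0$), but positivity of all power moments does not by itself give pointwise nonnegativity, so one needs more. The standard fact (going back to Pollard, Feller, and used in \cite{Bajlekova} for $M_\alpha=W_{-\alpha,1-\alpha}$) is that $W_{-\alpha,\beta}(-\cdot)$ is, up to normalization, a one-sided stable-type density or an integral thereof; concretely $F_\alpha = \alpha\,(\cdot)\,M_\alpha$ and $M_\alpha$ is a probability density of a stable subordinator, hence $\ge 0$, and $W_{-\alpha,\beta}(-\cdot)$ for general $\beta\ge 0$ is obtained from $M_\alpha$ by a fractional integration or by the subordination formula (\ref{Mainardi}), both of which preserve nonnegativity. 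So the plan for (ii) is: (a) recall/cite that $M_\alpha(t)=W_{-\alpha,1-\alpha}(-t)\ge 0$ for $t>0$ as the known stable density (this is the one case treated in \cite[Chapter 1]{Bajlekova}); (b) express $W_{-\alpha,\beta}(-t)$ for general $\beta\in[0,1)$ (and then all $\beta\ge 0$ by the derivative relation $\frac{d}{dz}W_{\lambda,\mu}=W_{\lambda,\lambda+\mu}$ is the wrong direction, so instead by Riemann–Liouville fractional integration: $W_{-\alpha,\beta}(-t) = g_{\gamma}* (\text{something nonnegative})$) in terms of $M_\alpha$ via a nonnegativity-preserving operation, most naturally the identity $t^{\beta-1}*\,$-type convolution coming from comparing Laplace transforms $\lambda^{\alpha-\beta-1}$; and (c) conclude nonnegativity. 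The main obstacle is exactly step (b)–(c) for (ii): making rigorous the passage from the single known nonnegative case $\beta=1-\alpha$ to arbitrary $\beta\ge 0$, since naive fractional integration can decrease the parameter but one must check it keeps us inside the nonnegative family; I expect to handle this by writing, via Laplace transforms, $W_{-\alpha,\beta}(-t) = \int_0^\infty g_{\beta-(1-\alpha)}(t-r)\,M_\alpha(r)\,dr$ when $\beta\ge 1-\alpha$ (a genuine nonnegative convolution) and, for $0\le\beta<1-\alpha$, using instead the representation $W_{-\alpha,\beta}(-t)=\alpha^{-1}t^{-1}\big(\text{nonneg}\big)$ coming from $F_\alpha$ and its own fractional integrals, or most uniformly invoking that all $W_{-\alpha,\beta}(-\cdot)$ with $\beta\ge0$ arise as $\int_0^\infty \M_\alpha(\cdot,\tau)\,d\nu(\tau)$ for a positive measure $\nu$ via (\ref{Mainardi}).
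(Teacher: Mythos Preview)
Your argument for (i) via the Hankel contour representation and Fubini is exactly the paper's proof.

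For (ii) your plan has a genuine gap. The convolution identity you propose for $\beta\ge 1-\alpha$, namely $W_{-\alpha,\beta}(-t)=(g_{\beta-(1-\alpha)}*M_\alpha)(t)$, is false: by \eqref{LTWF} the Laplace transform of the left side is $E_{\alpha,\alpha+\beta}(-\lambda)$, while the right side has Laplace transform $\lambda^{1-\alpha-\beta}E_\alpha(-\lambda)$; these disagree already as $\lambda\to 0^+$ whenever $\alpha+\beta\neq 1$ (one side stays bounded, the other does not). Your fallback suggestions for $0\le\beta<1-\alpha$ are too vague to assess, and the appeal to \eqref{Mainardi} only produces functions of the form $\M_\gamma(\cdot,t)$, i.e.\ still the special case $\beta=1-\gamma$, so it does not reach general $\beta$ either. (There \emph{is} a correct identity of this flavour---a Weyl-type integral $W_{-\alpha,\beta}(-u)=\int_u^\infty g_{\beta/\alpha}(s-u)\,F_\alpha(s)\,ds$, essentially Proposition~\ref{convo} at $t=1$---but it is not the one you wrote down.)

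The paper's argument for (ii) sidesteps all of this: by \eqref{LTWF} the Laplace transform of $W_{-\alpha,\beta}(-\cdot)$ is $\lambda\mapsto E_{\alpha,\alpha+\beta}(-\lambda)$; since $\alpha+\beta\ge\alpha$, this Mittag--Leffler function is completely monotone on $(0,\infty)$ (a known fact, \cite[(E.32)]{Mainardi}). The Post--Widder inversion formula then expresses $W_{-\alpha,\beta}(-t)$ as a pointwise limit of nonnegative quantities, yielding $W_{-\alpha,\beta}(-t)\ge 0$ for every $\beta\ge 0$ in one stroke, with no case analysis and no bootstrapping from the stable-density case $\beta=1-\alpha$.
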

\begin{proof} (i) Using the definition of $W_{-\alpha,\beta}$ we have $$\int_0^{\infty}{t^{\eta-1}\over\Gamma(\eta)}W_{-\alpha,\beta}(-t)\,dt=\frac{1}{2\pi i}\int_{H_a}e^{\sigma}\sigma^{-\beta-\alpha\eta}\,d\sigma=g_{\beta+\alpha\eta}(1),$$ where we have applied the Fubini theorem and the Laplace transform of $g_{\eta}.$

(ii) The positivity of $W_{-\alpha,\beta}(-t)$ follows from \eqref{LTWF}, the complete monotonicity of $E_{\alpha,\gamma}(-t)$ for $t>0,$ $0<\alpha<1$ and $\gamma\geq \alpha,$ see \cite[Appendix E, formula (E.32)]{Mainardi}, and the Post-Widder inversion formula, see \cite[Lemma 1.6]{Bajlekova}.
\end{proof}

\section{Scaled Wright functions}

\setcounter{theorem}{0}
\setcounter{equation}{0}

In this section, we introduce  two-parameter Wright functions in Definition \ref{psi}, which we call scaled Wright functions. This  class of functions includes the Wright {$\M$}-function introduced in \cite[Formula (6.2)]{MaiPaGo} and  also considered in \cite[(F.51)]{Mainardi} and stable L\'{e}vy processes. They satisfy important properties (Theorem \ref{propiedades} and Proposition \ref{convo}), a subordination principle (Theorem \ref{bonito}) and  play a crucial role in this paper.

\begin{definition}\label{psi} {\rm For $0<\alpha<1$ and $\beta\geq 0,$ we define the function  $\psi_{\alpha,\beta}$ in two variables by
 $$\psi_{\alpha,\beta}(t,s):=t^{\beta-1}W_{-\alpha,\beta}(-st^{-\alpha}),\qquad t>0,\ s\in\C.$$}
\end{definition}

\noindent Note that using the change of variable $z=\frac{\sigma}{t},$ we get the integral representation $$\psi_{\alpha,\beta}(t,s)=\frac{1}{2\pi i}\int_{Ha}z^{-\beta}e^{tz-sz^{\alpha}}\,dz, \qquad t,s>0.$$

The function $\psi_{\alpha,\beta}$ is considered in the literature in some particular cases:
 \begin{itemize}
 \item[(i)]
 for $\beta=1-\alpha,$ $$\psi_{\alpha,1-\alpha}(t,s)=t^{-\alpha}M_{\alpha}(st^{-\alpha})=\M_\alpha(s,t)=\varphi_{t,\alpha}(s), \qquad t,s>0,$$ where $\M_\alpha(s,t)$ is
the Wright $\M$-function in two variables studied in \cite[p. 257]{Mainardi} and $\varphi_{t,\alpha}(s)$ is considered in \cite[Theorem 3.1]{Bajlekova}; for $\alpha=\frac{1}{2},$ \begin{equation}\label{gassuian}\psi_{\frac{1}{2},\frac{1}{2}}(t,s)=\frac{1}{\sqrt{\pi t}}e^{-\frac{s^2}{4t}}, \qquad t,s>0, \end{equation} see \cite[Appendix F, formula (F.16)]{Mainardi}.

\item[(ii)] for $\beta=0,$
\begin{equation}\label{Levi}\psi_{\alpha,0}(t,s)=\frac{1}{2\pi i}\int_{Ha}e^{tz-sz^{\alpha}}\,dz=:f_{s,\alpha}(t), \qquad t,s>0\end{equation} is the stable L\'{e}vy process of order $\alpha$, see introduction, \cite{Bochner} and  \cite[Chapter IX]{Yosida}, in particular
\begin{equation*}
\psi_{{1\over 2},0}(t,s)={1\over 2\sqrt{\pi}}t^{-3\over 2}se^{-s^2\over 4t}, \qquad t,s>0.
\end{equation*}
\end{itemize}

In the next proposition, we join some properties which are verified by functions $\psi_{\alpha, \beta}$.

\begin{theorem}\label{propiedades} Let $0<\alpha<1$ and $\beta\geq 0,$ we have \begin{itemize}
\item[(i)] $\psi_{\alpha,\beta}(t,s)\geq 0,$ for $t,s>0.$

\item[(ii)]$\displaystyle\int_0^{\infty}e^{-\lambda t}\psi_{\alpha,\beta}(t,s)\,dt=\lambda^{-\beta}e^{-\lambda^{\alpha}s},$ for $ s, \lambda>0.$

\item[(iii)]$\displaystyle\int_0^{\infty}e^{\lambda s}\psi_{\alpha,\beta}(t,s)\,ds=t^{\alpha+\beta-1}E_{\alpha,\alpha+\beta}(\lambda t^{\alpha}),$ for $t>0,\ \lambda\in\C.$

\item[(iv)]$\displaystyle\int_0^{\infty}\int_0^{\infty}e^{-\lambda s}e^{-\mu t}\psi_{\alpha,\beta}(t,s)\,ds\,dt={1\over \mu^{\beta}(\mu^\alpha+\lambda)}$ for $t>0,\ \lambda,\mu>0.$

\item[(v)]$\psi_{\alpha,\beta+\gamma}(t,s)=(g_{\gamma}*\psi_{\alpha,\beta}(\cdot,s))(t),$ for $t,s, \gamma>0.$

\item[(vi)] $\displaystyle\int_0^{\infty}g_{\eta}(s)\psi_{\alpha,\beta}(t,s)\,ds=g_{\alpha\eta+\beta}(t),$ for $t, \eta>0.$

\end{itemize}

\end{theorem}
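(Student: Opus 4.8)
The overall strategy is to derive everything from the two integral representations of $\psi_{\alpha,\beta}$ — the Hankel-contour formula $\psi_{\alpha,\beta}(t,s)=\frac{1}{2\pi i}\int_{Ha}z^{-\beta}e^{tz-sz^{\alpha}}\,dz$ and the original series/Hankel representation of the Wright function — together with the Laplace-transform identities already available, namely formula (\ref{LTWF}), Proposition \ref{W}(i)--(ii), and the Mittag-Leffler Laplace transform (\ref{mittag}). First I would prove (ii), since it is the cornerstone: integrating the Hankel representation against $e^{-\lambda t}$ and applying Fubini (justified by absolute convergence on the Hankel path, where $\Re z\to-\infty$) collapses $\int_0^\infty e^{-\lambda t}e^{tz}\,dt$ to $(\lambda-z)^{-1}$, and then a residue/contour-deformation computation of $\frac{1}{2\pi i}\int_{Ha}\frac{z^{-\beta}e^{-sz^{\alpha}}}{\lambda-z}\,dz$ picks up the pole at $z=\lambda$, giving $\lambda^{-\beta}e^{-\lambda^{\alpha}s}$. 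Alternatively, and more cleanly, (ii) follows by writing $\psi_{\alpha,\beta}(t,s)=t^{\beta-1}W_{-\alpha,\beta}(-st^{-\alpha})$, expanding $W_{-\alpha,\beta}$ in its power series, and integrating term by term using $\widehat{g_\gamma}(\lambda)=\lambda^{-\gamma}$: the $n$-th term contributes $\frac{(-s)^n}{n!}\lambda^{-\beta-\alpha n}$, and summing gives $\lambda^{-\beta}\sum_n \frac{(-s\lambda^\alpha)^n}{n!}=\lambda^{-\beta}e^{-\lambda^\alpha s}$.

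Next, (i) is exactly Proposition \ref{W}(ii) applied after the substitution $u=st^{-\alpha}$: since $W_{-\alpha,\beta}(-u)\ge 0$ for $u>0$ and $t^{\beta-1}>0$, positivity of $\psi_{\alpha,\beta}(t,s)$ is immediate. For (iii), I would integrate the series for $\psi_{\alpha,\beta}$ against $e^{\lambda s}$: substituting $u=st^{-\alpha}$ turns $\int_0^\infty e^{\lambda s}\psi_{\alpha,\beta}(t,s)\,ds$ into $t^{\alpha+\beta-1}\int_0^\infty e^{\lambda t^\alpha u}W_{-\alpha,\beta}(-u)\,du$, which by (\ref{LTWF}) equals $t^{\alpha+\beta-1}E_{\alpha,\alpha+\beta}(\lambda t^\alpha)$; convergence for all $\lambda\in\C$ comes from the entire-function growth bounds on $W_{-\alpha,\beta}$. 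Item (iv) is then the double Laplace transform: apply (ii) in the $t$-variable to get $\int_0^\infty e^{-\lambda s}\mu^{-\beta}e^{-\mu^\alpha s}\,ds$, and this elementary integral evaluates to $\mu^{-\beta}(\mu^\alpha+\lambda)^{-1}$. (One should note the harmless typo ``$t>0$'' in the hypotheses of (iv), which plays no role.)

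For (v), I would compare Laplace transforms in $t$: by (ii) the transform of $\psi_{\alpha,\beta+\gamma}(\cdot,s)$ is $\lambda^{-\beta-\gamma}e^{-\lambda^\alpha s}=\widehat{g_\gamma}(\lambda)\cdot\lambda^{-\beta}e^{-\lambda^\alpha s}$, which is the Laplace transform of $g_\gamma * \psi_{\alpha,\beta}(\cdot,s)$; since both sides are continuous in $t$ and of suitable growth, uniqueness of the Laplace transform gives the identity (alternatively one can prove it directly from the derivative relation $\frac{d}{dz}W_{\lambda,\mu}=W_{\lambda,\lambda+\mu}$ and the contour representation). Finally (vi) follows by integrating the series representation of $\psi_{\alpha,\beta}(t,s)$ against $g_\eta(s)$ and using $\int_0^\infty g_\eta(s)\,(-s t^{-\alpha})^n/n!\,ds$-type moments — or, most transparently, by applying Proposition \ref{W}(i) after the substitution $u=st^{-\alpha}$, which yields $t^{\alpha\eta+\beta-1}/\Gamma(\alpha\eta+\beta)=g_{\alpha\eta+\beta}(t)$; note that (vi) is the special case $\lambda$-independent analogue one obtains from (iii) by replacing $e^{\lambda s}$ with the measure $g_\eta(s)\,ds$. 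The only genuine technical obstacle throughout is justifying the interchanges of summation/integration and the contour manipulations in (ii); once (ii) and (iii) are secured, the remaining items are short consequences via the uniqueness theorem for Laplace transforms and elementary substitutions.
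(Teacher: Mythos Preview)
Your plan is correct and matches the paper's proof almost line for line: (i) via Proposition~\ref{W}(ii), (ii) via the Hankel contour and a residue computation, (iii) via the substitution $u=st^{-\alpha}$ and (\ref{LTWF}), (iv)--(v) via (ii) together with Laplace-transform uniqueness, and (vi) via Proposition~\ref{W}(i) after the same substitution. One warning, though: your ``cleaner'' alternative for (ii) by term-by-term Laplace transformation of the Wright series does not work as written, because for $n$ large enough that $\beta-\alpha n\le 0$ the individual integrals $\int_0^\infty e^{-\lambda t}t^{\beta-\alpha n-1}\,dt$ diverge at $t=0$ (and incidentally the $n$-th term should carry $\lambda^{-\beta+\alpha n}$, not $\lambda^{-\beta-\alpha n}$); the contour/residue route is the one that actually goes through, and it is precisely what the paper uses.
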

\begin{proof}

\noindent (i) It is clear by Definition \ref{psi} and Proposition \ref{W}. (ii) It is easy to see that $\psi_{\alpha,\beta}(t,s)$ is of exponential growth in $t$. Let $\lambda>\max(a,b)>0,$ (where $a$ and $b$ are involved in the definition of Hankel path $Ha$). Then by Cauchy' theorem of residue, \begin{eqnarray*}
\displaystyle\int_0^{\infty}e^{-\lambda t}\psi_{\alpha,\beta}(t,s)\,dt=\frac{1}{2\pi i}\int_{Ha}\frac{z^{-\beta}e^{-sz^{\alpha}}}{z-\lambda}\,dz=\lambda^{-\beta}e^{-s\lambda^{\alpha}}.
\end{eqnarray*} (iii) Using \eqref{LTWF}, the result is direct by a change of variable. (iv) We combine part (ii) and (iii) to obtain the equality. (v) It is clear using Laplace transform and (i).
(vi) It is clear by a change of variable and applying Proposition \ref{W} (i).
\end{proof}

We combine  Theorem \ref{propiedades} (ii) and (iii) and formula (\ref{mittag}) to get the following corollary.

\begin{corollary}\label{PsiCapital} For $0<\alpha,\gamma<1,$ we denote by $\Psi_{\gamma,\alpha}$ the function given by $$\Psi_{\gamma,\alpha}(t,s):=\displaystyle\int_0^{\infty}\psi_{\gamma,0}(t,u)\psi_{\alpha,0}(s,u)\,du,\quad t,s>0.$$ Then $\Psi_{\gamma,\alpha}(t,s)=\Psi_{\alpha,\gamma}(s,t)$
, and  \begin{eqnarray*}\displaystyle\int_0^{\infty} e^{-\lambda t}\Psi_{\gamma,\alpha}(t,s)\,dt&=&s^{\alpha-1}E_{\alpha,\alpha}(-\lambda^{\gamma}s^{\alpha}), \qquad s>0,\cr
\int_0^{\infty} \int_0^\infty e^{-\mu s} e^{-\lambda t}\Psi_{\gamma,\alpha}(t,s)\,dt\, ds&=&{1\over \lambda^\gamma+ \mu^{\alpha}}
\end{eqnarray*}
for $\mu, \, \lambda>0.$
\end{corollary}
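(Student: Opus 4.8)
The plan is to derive all three assertions from Theorem \ref{propiedades} together with the Laplace transform identity (\ref{mittag}) for Mittag-Leffler functions, since $\Psi_{\gamma,\alpha}$ is built from the stable densities $\psi_{\gamma,0}$ and $\psi_{\alpha,0}$, whose Laplace transforms are given explicitly in Theorem \ref{propiedades}(ii) (with $\beta=0$). The symmetry $\Psi_{\gamma,\alpha}(t,s)=\Psi_{\alpha,\gamma}(s,t)$ is immediate from the defining integral: interchanging the roles of the two factors in the integrand just swaps $(\gamma,t)$ with $(\alpha,s)$, so nothing needs to be computed.

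For the single Laplace transform, I would apply $\int_0^\infty e^{-\lambda t}(\cdot)\,dt$ to the definition of $\Psi_{\gamma,\alpha}(t,s)$ and use Tonelli's theorem to interchange the $t$-integral with the $u$-integral (everything is nonnegative by Theorem \ref{propiedades}(i), so this is justified). The inner $t$-integral is $\int_0^\infty e^{-\lambda t}\psi_{\gamma,0}(t,u)\,dt = e^{-\lambda^\gamma u}$ by Theorem \ref{propiedades}(ii) with $\beta=0$. This leaves
$$
\int_0^\infty e^{-\lambda t}\Psi_{\gamma,\alpha}(t,s)\,dt = \int_0^\infty e^{-\lambda^\gamma u}\psi_{\alpha,0}(s,u)\,du,
$$
and now the remaining integral over $u$ is exactly the expression in Theorem \ref{propiedades}(iii) evaluated at $\beta=0$ and at the point $-\lambda^\gamma$ in place of $\lambda$, namely $s^{\alpha-1}E_{\alpha,\alpha}(-\lambda^\gamma s^\alpha)$. (Alternatively, one can recognize $\int_0^\infty e^{-\lambda^\gamma u}\psi_{\alpha,0}(s,u)\,du$ directly via (\ref{LTWF}) after scaling; both routes give the same answer.) This proves the first formula.

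For the double Laplace transform, I would take the formula just obtained and integrate $\int_0^\infty e^{-\mu s}(\cdot)\,ds$, again interchanging the order of integration by nonnegativity. This reduces to computing
$$
\int_0^\infty e^{-\mu s}s^{\alpha-1}E_{\alpha,\alpha}(-\lambda^\gamma s^\alpha)\,ds,
$$
which is precisely the Mittag-Leffler Laplace transform (\ref{mittag}) with parameters $\alpha$, $\beta=\alpha$, $\omega=-\lambda^\gamma$: the right-hand side is $\mu^{\alpha-\alpha}/(\mu^\alpha-(-\lambda^\gamma)) = 1/(\mu^\alpha+\lambda^\gamma)$, as claimed. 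Equivalently, one could obtain this line directly by combining Theorem \ref{propiedades}(ii) and (iv): apply both Laplace transforms to the definition of $\Psi_{\gamma,\alpha}$, do the $t$- and $s$-integrals first via (ii) to get $\int_0^\infty e^{-\lambda^\gamma u}e^{-\mu^\alpha u}\,du = 1/(\lambda^\gamma+\mu^\alpha)$.

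I do not expect a serious obstacle here; the only points requiring a word of care are the Fubini/Tonelli interchanges (handled by the positivity statement in Theorem \ref{propiedades}(i), so Tonelli applies and no integrability hypothesis is needed before the computation) and making sure the Mittag-Leffler transform (\ref{mittag}) is being invoked in its region of validity — with $\omega=-\lambda^\gamma<0$ the condition $\Re\mu>\omega^{1/\alpha}$ is trivially satisfied for all $\mu>0$, so there is no restriction. The mild subtlety that (\ref{mittag}) as displayed assumes $\omega>0$ is resolved by analytic continuation in $\omega$, or simply by noting that the Laplace transform of $s^{\alpha-1}E_{\alpha,\alpha}(\omega s^\alpha)$ is an entire-in-$\omega$ identity wherever both sides converge; in any event the alternative derivation via Theorem \ref{propiedades}(ii) and (iv) sidesteps (\ref{mittag}) entirely.
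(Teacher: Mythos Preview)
Your proposal is correct and follows precisely the route the paper indicates: the paper's entire proof is the one-line remark ``We combine Theorem~\ref{propiedades} (ii) and (iii) and formula (\ref{mittag}),'' and you have carried out exactly that combination, with the added care of justifying the Fubini/Tonelli interchanges via positivity and noting the domain issue for (\ref{mittag}) when $\omega<0$.
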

Note that
$$
\Psi_{{1\over 2},{1\over 2}}(t,s)={1\over 2\sqrt{\pi}(t+s)^{3\over 2}}, \qquad t,s>0.
$$

The following key lemma includes  the particular case $\alpha=\beta=\frac{1}{2}$ and $0<\eta$ proved in \cite[Lemma 1]{Miana}.
\begin{proposition}\label{convo} For $0<\alpha<1$,  $\beta\geq 0,$ and $0<\eta$, the following  identity holds $$\psi_{\alpha,\beta+\alpha\eta}(t,u)=\int_u^{\infty}g_{\eta}(s-u)\psi_{\alpha,\beta}(t,s)\,ds$$ for $t,u>0.$

\end{proposition}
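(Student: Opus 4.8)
The plan is to prove the identity by the Laplace transform method, exploiting that both sides are functions of $t$ of exponential growth (for fixed $u>0$) whose transforms can be computed explicitly using Theorem \ref{propiedades}(ii) and the uniqueness of the Laplace transform. Concretely, I would fix $u>0$ and denote by $\Phi(t)$ the right-hand side $\int_u^\infty g_\eta(s-u)\psi_{\alpha,\beta}(t,s)\,ds$. The first step is to justify that $\Phi$ is well-defined, non-negative, measurable, and of exponential growth in $t$: non-negativity comes from Theorem \ref{propiedades}(i), and the growth/integrability estimate can be obtained either from the integral (Hankel) representation of $\psi_{\alpha,\beta}$ or, more cleanly, by monotone convergence together with the finiteness of the double Laplace transform computed below, which a posteriori guarantees that the interchange of integrals is legitimate.

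The second step is the computation of $\int_0^\infty e^{-\lambda t}\Phi(t)\,dt$ for $\lambda>0$. By Tonelli's theorem (all integrands are non-negative) I may interchange the $t$- and $s$-integrals to get
\begin{equation*}
\int_0^\infty e^{-\lambda t}\Phi(t)\,dt=\int_u^\infty g_\eta(s-u)\left(\int_0^\infty e^{-\lambda t}\psi_{\alpha,\beta}(t,s)\,dt\right)ds=\int_u^\infty g_\eta(s-u)\,\lambda^{-\beta}e^{-\lambda^\alpha s}\,ds,
\end{equation*}
using Theorem \ref{propiedades}(ii). Now substitute $r=s-u$, so the last expression becomes
\begin{equation*}
\lambda^{-\beta}e^{-\lambda^\alpha u}\int_0^\infty g_\eta(r)e^{-\lambda^\alpha r}\,dr=\lambda^{-\beta}e^{-\lambda^\alpha u}\,\widehat{g_\eta}(\lambda^\alpha)=\lambda^{-\beta}e^{-\lambda^\alpha u}\,\lambda^{-\alpha\eta}=\lambda^{-(\beta+\alpha\eta)}e^{-\lambda^\alpha u}.
\end{equation*}
On the other hand, again by Theorem \ref{propiedades}(ii) applied with the second parameter $\beta+\alpha\eta$ (which is $\geq 0$), the Laplace transform of the left-hand side $t\mapsto\psi_{\alpha,\beta+\alpha\eta}(t,u)$ is exactly $\lambda^{-(\beta+\alpha\eta)}e^{-\lambda^\alpha u}$. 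Hence the two functions of $t$ have the same Laplace transform on $(0,\infty)$.

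The third and final step is to invoke uniqueness of the Laplace transform: both $t\mapsto\psi_{\alpha,\beta+\alpha\eta}(t,u)$ and $\Phi$ are continuous (indeed the former is smooth from the series/Hankel representation, and the latter is continuous by dominated convergence using the exponential bound) and of exponential growth, so having equal Laplace transforms on a half-line forces them to coincide for all $t>0$; this gives the claimed identity for every $u>0$. I expect the only genuinely delicate point to be the a priori justification that $\Phi(t)$ is finite and of exponential growth so that the Fubini/Tonelli interchange and the Laplace-uniqueness argument are valid; this is handled by the non-negativity of the integrand (so Tonelli applies unconditionally) together with the finiteness of the resulting transform $\lambda^{-(\beta+\alpha\eta)}e^{-\lambda^\alpha u}<\infty$ for each $\lambda>0$, which forces $\Phi\in L^1_{loc}$ with the required growth. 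Everything else is a routine change of variables plus the already-established Laplace transform formulas.
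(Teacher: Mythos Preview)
Your proposal is correct and follows essentially the same route as the paper: take the Laplace transform in $t$ of both sides, use Theorem \ref{propiedades}(ii) to reduce the right-hand side to $\lambda^{-\beta}\int_u^\infty g_\eta(s-u)e^{-\lambda^\alpha s}\,ds=\lambda^{-(\beta+\alpha\eta)}e^{-\lambda^\alpha u}$, match with the transform of the left-hand side, and conclude by uniqueness. Your write-up is in fact slightly more careful than the paper's in justifying the interchange of integrals via Tonelli (using the non-negativity from Theorem \ref{propiedades}(i)) and in making the change of variable $r=s-u$ explicit, whereas the paper simply cites \cite[Chapter II, (5.11)]{Miller} for that inner integral.
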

\begin{proof} Note that $\psi_{\alpha,\beta}$ is a Laplace transformable function and locally integrable in two variables. We apply the Laplace transform in  variable $t$ to get in the right side
$$\int_0^{\infty}e^{-\lambda t}\int_u^{\infty}g_{\eta}(s-u)\psi_{\alpha,\beta}(t,s)\,ds\,dt=\lambda^{-\beta}\int_u^{\infty}g_{\eta}(s-u)e^{-\lambda^{\alpha}s}\,ds=\lambda^{-\beta}\lambda^{-\alpha\eta}e^{-\lambda^{\alpha}u},$$ with $\lambda>0$ where we have applied  Theorem \ref{propiedades} (ii) and \cite[Chapter II, (5.11)]{Miller}.

 In the left side, we also apply the Laplace transform in the variable $t$ to get that $$\int_0^{\infty}e^{-\lambda t}\psi_{\alpha,\beta+\alpha\eta}(t,u)\,dt=\lambda^{-(\beta+\alpha \eta)}e^{-\lambda^{\alpha}u},\qquad u>0,$$ with $\lambda>0,$ where we have used Theorem \ref{propiedades} (ii).
\end{proof}

\noindent {\it Remark.} For $u=0$ in Proposition \ref{convo}, we obtain the equality
$$
\psi_{\alpha,\beta+\alpha\eta}(t,0)=\int_0^\infty g_{\eta}(s)\psi_{\alpha,\beta}(t,s)\,ds=g_{\alpha\eta+\beta}(t), \qquad t>0,
$$
proved in Theorem \ref{propiedades} (vi).

Finally, we show an algebraic identity which satisfies functions $\psi_{\alpha,1-\alpha}$.

\begin{proposition} \label{integrales} Take $0<\alpha<1,$ $\beta>\alpha$ and $t,s>0$. Then \begin{itemize}
\item[(i)]$\displaystyle
 \left( \int_t^{t+s}  - \int_0^s\right)\frac{(t+s-r)^{\beta-\alpha-1}}{\Gamma(\beta- \alpha)}\psi_{\alpha,\beta-\alpha}(r,u)\, dr \\ \\ \qquad \qquad = \frac{\alpha}{\Gamma(1-\alpha)} \int_0^t \int_0^s \frac{\left(\psi_{\alpha,\beta-\alpha}(r_1,(\cdot))\ast \psi_{\alpha,\beta-\alpha}(r_2,(\cdot))\right)(u) }{(t+s-r_1-r_2)^{1+\alpha}}\,dr_1\, dr_2.
 $ \\

 \item[(ii)]$\displaystyle \psi_{\alpha,0}(t+s,u)=\frac{\alpha}{\Gamma(1-\alpha)} \int_0^t \int_0^s \frac{\left(\psi_{\alpha,0}(r_1,(\cdot))\ast \psi_{\alpha,0}(r_2,(\cdot))\right)(u) }{(t+s-r_1-r_2)^{1+\alpha}}\,dr_1\, dr_2.$
\end{itemize}
\end{proposition}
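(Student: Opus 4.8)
The natural strategy is to transfer both identities to the Laplace‐transform side in the variable $u$, using the fact that $\psi_{\alpha,\gamma}(\cdot,u)$ has a simple Laplace transform in $t$ (Theorem \ref{propiedades}(ii)) and that, by formula (\ref{LTWF}), the Laplace transform of $\psi_{\alpha,\gamma}(t,\cdot)$ in the $u$–variable is a Mittag‑Leffler expression (this is essentially Theorem \ref{propiedades}(iii) with $\lambda\mapsto -\lambda$). Concretely, I would first prove (ii) and then obtain (i) either by the same method or by an integration/convolution argument from (ii).

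For part (ii), I would take the Laplace transform in $u$ of both sides. On the left, $\int_0^\infty e^{-\omega u}\psi_{\alpha,0}(t+s,u)\,du = (t+s)^{\alpha-1}E_{\alpha,\alpha}(-\omega(t+s)^\alpha)$ by Theorem \ref{propiedades}(iii) with $\beta=0$ and $\lambda=-\omega$. On the right, since the Laplace transform turns the inner convolution in $u$ into a product, the right‑hand side becomes
\[
\frac{\alpha}{\Gamma(1-\alpha)}\int_0^t\int_0^s
\frac{r_1^{\alpha-1}E_{\alpha,\alpha}(-\omega r_1^\alpha)\,r_2^{\alpha-1}E_{\alpha,\alpha}(-\omega r_2^\alpha)}{(t+s-r_1-r_2)^{1+\alpha}}\,dr_1\,dr_2 .
\]
This is exactly identity (\ref{eq2.4}) with $\omega$ replaced by $-\omega$; so both Laplace transforms agree for all $\omega>0$. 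Since $u\mapsto\psi_{\alpha,0}(t+s,u)$ and the right‑hand side are continuous and of suitable growth in $u$, uniqueness of the Laplace transform gives (ii). The Fubini interchanges needed to pull the $e^{-\omega u}$ inside the double integral are justified by the nonnegativity in Theorem \ref{propiedades}(i) together with Proposition \ref{W}(ii).

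For part (i), I would run the identical argument: apply $\int_0^\infty e^{-\omega u}(\cdot)\,du$ to both sides. On the left, using Theorem \ref{propiedades}(iii) with $\beta$ replaced by $\beta-\alpha$ (so the transform of $\psi_{\alpha,\beta-\alpha}(r,\cdot)$ is $r^{\beta-1}E_{\alpha,\beta}(-\omega r^\alpha)$), the left side becomes
\[
\Bigl(\int_t^{t+s}-\int_0^s\Bigr)\frac{(t+s-r)^{\beta-\alpha-1}}{\Gamma(\beta-\alpha)}\,r^{\beta-1}E_{\alpha,\beta}(-\omega r^\alpha)\,dr ,
\]
while the right side, after converting the $u$–convolution to a product, becomes
\[
\frac{\alpha}{\Gamma(1-\alpha)}\int_0^t\int_0^s
\frac{r_1^{\beta-1}E_{\alpha,\beta}(-\omega r_1^\alpha)\,r_2^{\beta-1}E_{\alpha,\beta}(-\omega r_2^\alpha)}{(t+s-r_1-r_2)^{1+\alpha}}\,dr_1\,dr_2 .
\]
These two expressions coincide by identity (\ref{eq2.3}) with $\omega\mapsto-\omega$ (valid there for $0<\alpha<1$, $\beta>\alpha$). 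Again invoking uniqueness of the Laplace transform in $u$ finishes (i); alternatively, (i) follows from (ii) by convolving both sides with $g_{\beta-\alpha}$ in an appropriate variable and using Theorem \ref{propiedades}(v), but the direct route via (\ref{eq2.3}) is cleaner.

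**Main obstacle.** The substantive mathematical content is entirely imported from (\ref{eq2.3})–(\ref{eq2.4}), so the real work is purely bookkeeping: checking that every Mittag‑Leffler identity quoted for $\omega>0$ remains valid (by analytic continuation in $\omega$, since all series converge entirely) when the argument is the negative number $-\omega$, and verifying the Fubini–Tonelli interchanges that move $e^{-\omega u}$ through the double $r$–integral and through the inner $u$–convolution. The positivity statements (Theorem \ref{propiedades}(i), Proposition \ref{W}(ii)) make these interchanges routine, so I expect no genuine difficulty—only care with the boundary exponent $\beta-\alpha$ possibly being small, where one should note $\psi_{\alpha,\beta-\alpha}(\cdot,u)$ is still locally integrable and Laplace transformable, as already observed in the proof of Proposition \ref{convo}.
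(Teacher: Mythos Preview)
Your proposal is correct and follows essentially the same route as the paper: take the Laplace transform in the variable $u$, use Theorem \ref{propiedades}(iii) to convert $\psi_{\alpha,\beta-\alpha}(r,\cdot)$ into $r^{\beta-1}E_{\alpha,\beta}(-\omega r^\alpha)$ (and the convolution into a product), recognize the resulting equalities as (\ref{eq2.3}) and (\ref{eq2.4}), and conclude by injectivity of the Laplace transform. You are in fact slightly more careful than the paper in flagging the sign issue $\omega\mapsto-\omega$ (since (\ref{eq2.3}) is stated for $\omega>0$) and the Fubini justifications, both of which are handled exactly as you indicate.
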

\begin{proof}Note that $\widehat{\psi_{\alpha,\beta-\alpha}}(r,(\cdot))(\lambda)=r^{\beta-1}E_{\alpha,\beta}(-\lambda r^\alpha)$, for $\beta\geq \alpha,$ $r>0$ and $\lambda\in \C$, see Proposition \ref{propiedades} (iii). Then we apply the Laplace transform in the variable $u$ in both equalities to get the identities \eqref{eq2.3} and \eqref{eq2.4}. The injectivity of Laplace transform allows us to finish the proof.
\end{proof}

To finish this section, we prove a subordination formula for functions $\psi_{\alpha, \beta}$ which expands some well-known results.

\begin{theorem} \label{bonito} For $0<\alpha, \delta<1$,  $\beta\ge \alpha $ and $ \delta \ge \gamma$, the following identity holds
$$
\psi_{\alpha\gamma, \beta-\alpha +\alpha(\delta-\gamma)}(t,s)=\int_0^\infty \psi_{\alpha, \beta-\alpha}(t,r)\psi_{\gamma, \delta-\gamma}(r,s)\,dr, \qquad t,s>0.
$$

\end{theorem}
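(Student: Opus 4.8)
The plan is to prove the identity by taking Laplace transforms in each variable and using the characterizations in Theorem \ref{propiedades} together with the injectivity of the (double) Laplace transform.

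First I would compute the Laplace transform in the variable $t$ of the right-hand side. By Fubini (the integrands are nonnegative by Theorem \ref{propiedades}(i), and exponentially bounded in $t$),
$$
\int_0^\infty e^{-\lambda t}\left(\int_0^\infty \psi_{\alpha,\beta-\alpha}(t,r)\psi_{\gamma,\delta-\gamma}(r,s)\,dr\right)dt=\int_0^\infty \psi_{\gamma,\delta-\gamma}(r,s)\left(\int_0^\infty e^{-\lambda t}\psi_{\alpha,\beta-\alpha}(t,r)\,dt\right)dr.
$$
By Theorem \ref{propiedades}(ii) the inner integral equals $\lambda^{-(\beta-\alpha)}e^{-\lambda^\alpha r}$, so the whole expression becomes $\lambda^{-(\beta-\alpha)}\int_0^\infty e^{-\lambda^\alpha r}\psi_{\gamma,\delta-\gamma}(r,s)\,dr$. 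Applying Theorem \ref{propiedades}(ii) once more, with the roles $(\gamma,\delta-\gamma)$ and Laplace parameter $\lambda^\alpha>0$, this is $\lambda^{-(\beta-\alpha)}(\lambda^\alpha)^{-(\delta-\gamma)}e^{-(\lambda^\alpha)^\gamma s}=\lambda^{-(\beta-\alpha)-\alpha(\delta-\gamma)}e^{-\lambda^{\alpha\gamma}s}$.

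On the other hand, the Laplace transform in $t$ of the left-hand side is, directly from Theorem \ref{propiedades}(ii) applied with first index $\alpha\gamma$ and second index $\beta-\alpha+\alpha(\delta-\gamma)$, exactly $\lambda^{-(\beta-\alpha+\alpha(\delta-\gamma))}e^{-\lambda^{\alpha\gamma}s}$. These two transforms coincide for all $\lambda>0$ and all $s>0$; since both sides of the claimed identity are continuous Laplace-transformable functions of $t$ on $(0,\infty)$ for each fixed $s$, injectivity of the Laplace transform yields the equality pointwise. Alternatively one can transform in both variables and match against Theorem \ref{propiedades}(iv).

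The only genuine points requiring care are the justification of Fubini's theorem — handled by positivity of the $\psi$'s and the exponential-growth bound in $t$ noted in the proof of Theorem \ref{propiedades}(ii) — and checking that all the index constraints used are the ones in the hypothesis: Theorem \ref{propiedades}(ii) as stated needs the second index $\geq 0$, which holds since $\beta\geq\alpha$ gives $\beta-\alpha\geq 0$, since $\delta\geq\gamma$ gives $\delta-\gamma\geq 0$, and adding these with the factor $\alpha$ keeps $\beta-\alpha+\alpha(\delta-\gamma)\geq 0$; the first indices $\alpha,\gamma,\alpha\gamma$ all lie in $(0,1)$. So there is no real obstacle; the argument is a clean two-step unwinding of the defining Laplace transform, and the ``hard part'' is merely bookkeeping of the parameters.
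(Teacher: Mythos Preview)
Your proof is correct and is in the same spirit as the paper's---both arguments identify the two sides via Laplace transforms and appeal to injectivity---but your execution is a shade more economical. The paper takes the \emph{double} Laplace transform in $(t,s)$: after Fubini it uses Theorem~\ref{propiedades}(ii) for the $t$-integral and Theorem~\ref{propiedades}(iii) for the $s$-integral, arriving at $\lambda^{\alpha-\beta}\int_0^\infty e^{-\lambda^\alpha r}\,r^{\delta-1}E_{\gamma,\delta}(-\mu r^\gamma)\,dr$, and then invokes the Mittag-Leffler Laplace transform~(\ref{mittag}) to close the computation, finishing with uniqueness of the double transform and Theorem~\ref{propiedades}(iv). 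You instead transform only in $t$ and apply Theorem~\ref{propiedades}(ii) twice (the second time with Laplace parameter $\lambda^\alpha$), which sidesteps the Mittag-Leffler function, formula~(\ref{mittag}), and the double-transform inversion altogether. Your positivity-based Fubini justification is cleaner as well. Nothing is lost by your shortcut; the paper's route simply passes through more of the machinery developed in Sections~2--3.
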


\begin{proof} To show this theorem, we apply the Laplace transform in both variables $(t,s)$, the so called double Laplace transform, Fubini theorem, Theorem \ref{propiedades} (ii) and (iii) and finally formula (\ref{mittag}) to get that
\begin{eqnarray*}
\int_0^\infty\int_0^\infty e^{-\lambda t-\mu s}\int_0^\infty \psi_{\alpha, \beta-\alpha}(t,r)\psi_{\gamma, \delta-\gamma}(r,s)dr\,ds\,dt&=&\int_0^\infty \lambda^{\alpha-\beta} e^{-\lambda^\alpha r} r^{\gamma-1}E_{\gamma, \delta}(-\mu r^\gamma)\,dr\\&=& {\lambda^{\alpha\gamma- (\beta-\alpha +\alpha \gamma)}\over \lambda^{\alpha \gamma}+\mu}
\end{eqnarray*}
for $\Re \lambda, \Re \mu >0$. Due to Theorem \ref{propiedades} (iv) and  the uniqueness of the double Laplace transform (see for example \cite[p. 346]{DP}), we conclude the equality.
\end{proof}

\begin{remark}\label{cases}{\rm  In the case that $\beta=\delta=1,$ we obtain the formula (\ref{Mainardi}). For $\alpha=\beta$ and $\gamma=\delta,$ we get the following subordination formula for  stable L\'{e}vy processes
$$
f_{s, \alpha \gamma}(t)=\int_0^\infty f_{r, \alpha}(t)f_{s, \gamma}(r)\,dr, \qquad s,t>0,
$$
for $0<\alpha,\gamma<1.$ Finally for $\alpha=\gamma={1\over 2}$ and $\beta=\delta=1$, we obtain that
$$
\psi_{{1\over 4},{3\over 4 }}(t,s)={1\over \pi \sqrt{t}}\int_0^\infty e^{{-r^2\over 4t}-{s^2\over 4r}}{dr\over \sqrt{r}}, \qquad t,s>0,
$$
where we use the equality (\ref{gassuian})}.
\end{remark}

\section{Subordination principle and spectral inclusions for regularized resolvent families}

\setcounter{theorem}{0}
\setcounter{equation}{0}

In the following we consider that the operator $A$ is a densely defined closed linear operator on a Banach space $X$.
Let $\alpha,\beta>0.$ A family $\{S_{\alpha,\beta}(t)\}_{t> 0}\subset\mathcal{B}(X)$ is a $(g_{\alpha},g_{\beta})$-regularized resolvent family generated by $A$ if the following conditions are satisfied.
\begin{itemize}

\item[(a)] $S_{\alpha,\beta}(t)$ is strongly continuous for $t> 0$ and
$\displaystyle\lim_{t\to 0^+}\frac{S_{\alpha,\beta}(t)x}{g_\beta(t)}=x$ for all $x\in X;$ \\

\item[(b)] $S_{\alpha,\beta}(t)A\subset AS_{\alpha,\beta}(t)$, i.e.,  $S_{\alpha,\beta}(t)A(x)\subset AS_{\alpha,\beta}(t)x$ for $x\in D(A)$ and  $t>0.$

\item[(c)] The integral equation $$S_{\alpha,\beta}(t)x={t^{\beta-1}\over \Gamma(\beta)}x+A\left(\int_0^t{(t-s)^{\alpha-1}\over \Gamma(\alpha)}S_{\alpha,\beta}(s)x\,ds\right),$$ holds for $x\in X$ and $t>0 .$

\end{itemize}
This family of operators was formerly introduced  for general kernels $(a, k)$ in \cite[Definition 2.1]{Lizama}. The above definition is also considered for $\alpha>0$ and $\beta\geq 1$  in \cite{Chen}; for $0<\alpha=\beta<1$ in \cite{Mei-Pe-Zh13}; and
for $\alpha>0$ and $\beta=1$ in \cite[Definition 2.1]{Bazhlepaper}.

The reason why we do not consider the value of $S_{\alpha,\beta}(\cdot)$ at $0$ in the condition $(a)$ (compare with \cite[Definition 2.1, Condition (R1)]{Lizama}) is that the function $t\mapsto g_{\beta}(t)$ has a singularity at $0$ if $0<\beta<1.$

Let $ S: (0,\infty)\to {\mathcal B}(X) $ be a strongly continuous operator family such that $S(\cdot)x\in L^1_{loc}(\R_+,X),$ for any $x\in X.$ The operator family $\{S(t)\}_{t>0}$ is said Laplace-transformable if there exists $\omega\in \R$ such that the Laplace transform of
$S$ $$ \hat S (\lambda)x = \displaystyle
\int_0^\infty e^{- \lambda t} S(t)x\, dt, \qquad\Re \lambda>\omega $$ converges for $x\in X$, see for example \cite[Definition 3.1.4]{ABHN}.
If A generates a $(g_{\alpha},g_{\beta})$-regularized resolvent family $\{S_{\alpha,\beta}(t)\}_{t> 0}$ such that $S(\cdot)x\in L^1_{loc}(\R_+,X)$ for $x\in X$ and is Laplace transformable of parameter $\omega,$ we write $A\in \mathcal{C}^{\alpha,\beta}(\omega).$ We denote by $\mathcal{C}^{\alpha,\beta}:=\bigcup \{\mathcal{C}^{\alpha,\beta}(\omega);\ \omega\geq 0\}.$ The case of $(g_{\alpha},1)$-regularized resolvent families exponentially bounded, $\lVert S_{\alpha,1}(t)\rVert\leq Me^{wt}$ for $t>0,$ is considered in \cite[Definition 2.5]{Li} and \cite[Definition 2.4]{Bajlekova}, in this case, $\mathcal{C}^{\alpha}(\omega):=\mathcal{C}^{\alpha,1}(\omega).$

The next theorem characterizes the Laplace transform of $(g_\alpha, g_\beta)$-regularized family,   extends \cite[Theorem 3.11]{Chen} and  the proof is similar to the proof of \cite[Proposition 3.1]{Lizama}.

\begin{theorem}\label{caracterizacion} Let $\alpha,\beta > 0.$ Then $A\in\mathcal{C}^{\alpha,\beta}(\omega)$ if and only if $(\omega^{\alpha},\infty)\subset \rho(A)$ and there exists a strongly continuous function $S_{\alpha,\beta}(\cdot):(0,\infty)\to \mathcal{B}(X)$, locally integrable,
 $\displaystyle\lim_{t\to 0^+}\frac{S_{\alpha,\beta}(t)x}{g_\beta(t)}=x$ for all $x\in X,$ and
  Laplace transformable such that $$\int_0^{\infty}e^{-\lambda t} S_{\alpha,\beta}(t)x\,dt=\lambda^{\alpha-\beta}(\lambda^{\alpha}-A)^{-1}x,\qquad \lambda>\omega,$$ for all $x\in X.$ Furthermore, the family $\{S_{\alpha,\beta}(t)\}_{t>0 }$ is the $(g_{\alpha},g_{\beta})$-regularized resolvent family generated by $A.$
\end{theorem}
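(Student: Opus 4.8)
The plan is to prove this by a double implication, using the Laplace transform as the main tool, following the standard template for resolvent family characterizations (as in \cite[Proposition 3.1]{Lizama}).

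For the ``only if'' direction, assume $A\in\mathcal{C}^{\alpha,\beta}(\omega)$, so there is a $(g_\alpha,g_\beta)$-regularized resolvent family $\{S_{\alpha,\beta}(t)\}_{t>0}$ generated by $A$ which is locally integrable and Laplace transformable with abscissa $\omega$. First I would apply the Laplace transform to the integral equation in condition (c). Writing $\widehat{S_{\alpha,\beta}}(\lambda)x$ for the transform, the convolution term becomes $\widehat{g_\alpha}(\lambda)\,\widehat{S_{\alpha,\beta}}(\lambda)x = \lambda^{-\alpha}\widehat{S_{\alpha,\beta}}(\lambda)x$, and the term $g_\beta(t)x$ transforms to $\lambda^{-\beta}x$. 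Using the closedness of $A$ to pull it outside the integral, one gets
\begin{equation*}
\widehat{S_{\alpha,\beta}}(\lambda)x = \lambda^{-\beta}x + \lambda^{-\alpha}A\,\widehat{S_{\alpha,\beta}}(\lambda)x, \qquad \lambda>\omega.
\end{equation*}
Rearranging, $(\lambda^\alpha - A)\widehat{S_{\alpha,\beta}}(\lambda)x = \lambda^{\alpha-\beta}x$ for all $x\in X$. A symmetric computation using condition (b) on $x\in D(A)$ shows $\widehat{S_{\alpha,\beta}}(\lambda)(\lambda^\alpha - A)x = \lambda^{\alpha-\beta}x$, so $\lambda^\alpha - A$ is bijective with bounded inverse $\lambda^{\beta-\alpha}\widehat{S_{\alpha,\beta}}(\lambda)$; hence $(\omega^\alpha,\infty)\subset\rho(A)$ (for $\lambda>\omega$ one has $\lambda^\alpha>\omega^\alpha$, and one checks every point of $(\omega^\alpha,\infty)$ is covered) and the desired transform identity holds. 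The continuity, local integrability, limit condition at $0$, and Laplace-transformability are inherited directly from the definition of membership in $\mathcal{C}^{\alpha,\beta}(\omega)$.

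For the ``if'' direction, suppose $(\omega^\alpha,\infty)\subset\rho(A)$ and there is a strongly continuous, locally integrable, Laplace-transformable family $S_{\alpha,\beta}(\cdot)$ with $\lim_{t\to0^+}S_{\alpha,\beta}(t)x/g_\beta(t)=x$ and $\widehat{S_{\alpha,\beta}}(\lambda)x=\lambda^{\alpha-\beta}(\lambda^\alpha-A)^{-1}x$ for $\lambda>\omega$. I must verify conditions (a), (b), (c) and conclude $A\in\mathcal{C}^{\alpha,\beta}(\omega)$. Condition (a) is given. For (b): since $(\lambda^\alpha-A)^{-1}$ commutes with $A$ on $D(A)$, so does $\widehat{S_{\alpha,\beta}}(\lambda)$, and then the uniqueness theorem for the Laplace transform transfers this to $S_{\alpha,\beta}(t)A\subset AS_{\alpha,\beta}(t)$ for each $t>0$ (using strong continuity and closedness of $A$ to handle the inversion). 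For (c): from the resolvent identity $(\lambda^\alpha-A)^{-1}=\lambda^{-\alpha}I+\lambda^{-\alpha}A(\lambda^\alpha-A)^{-1}$, multiply by $\lambda^{\alpha-\beta}$ to obtain
\begin{equation*}
\widehat{S_{\alpha,\beta}}(\lambda)x=\lambda^{-\beta}x+\lambda^{-\alpha}A\,\widehat{S_{\alpha,\beta}}(\lambda)x=\widehat{g_\beta}(\lambda)x+\widehat{g_\alpha}(\lambda)A\,\widehat{S_{\alpha,\beta}}(\lambda)x.
\end{equation*}
The right-hand side is the Laplace transform of $g_\beta(t)x+A\int_0^t g_\alpha(t-s)S_{\alpha,\beta}(s)x\,ds$ (here one uses closedness of $A$ to interchange $A$ with the Bochner integral, noting $\int_0^t g_\alpha(t-s)S_{\alpha,\beta}(s)x\,ds$ lies in $D(A)$ because its transform $\lambda^{-\alpha}\widehat{S_{\alpha,\beta}}(\lambda)x$ lies in the range of $(\lambda^\alpha-A)^{-1}=D(A)$-valued and one applies the uniqueness/closedness argument pointwise). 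By injectivity of the Laplace transform on continuous functions, the integral equation in (c) follows for a.e.\ $t$, and then for all $t>0$ by continuity of both sides. Thus $\{S_{\alpha,\beta}(t)\}_{t>0}$ is the $(g_\alpha,g_\beta)$-regularized resolvent family generated by $A$, and since it is locally integrable and Laplace-transformable of parameter $\omega$, we get $A\in\mathcal{C}^{\alpha,\beta}(\omega)$.

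The main obstacle I anticipate is the careful handling of the closed operator $A$ together with Bochner integrals and the Laplace transform inversion: specifically, justifying that $\int_0^t g_\alpha(t-s)S_{\alpha,\beta}(s)x\,ds\in D(A)$ and that $A$ commutes with this integral, and making the ``uniqueness of the Laplace transform'' step rigorous when one side involves the unbounded $A$. The standard device is to apply a bounded operator such as $(\mu^\alpha-A)^{-1}$ (for a fixed $\mu>\omega$) to both sides first, reducing everything to an identity between $\mathcal{B}(X)$-valued continuous functions where the uniqueness theorem applies cleanly, and then remove it using the closedness of $A$ and the density of $D(A)$. Beyond this bookkeeping, every step is a routine transform computation, and the result follows the blueprint of \cite[Proposition 3.1]{Lizama} and \cite[Theorem 3.11]{Chen}.
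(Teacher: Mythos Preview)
Your proposal is correct and follows exactly the approach the paper indicates: the paper does not give a proof but simply states that it ``is similar to the proof of \cite[Proposition 3.1]{Lizama}'' (and extends \cite[Theorem 3.11]{Chen}), which is precisely the Laplace-transform template you have carried out. Your identification of the only delicate point---passing the closed operator $A$ through Bochner integrals and the Laplace inversion, handled by composing with a fixed resolvent $(\mu^\alpha-A)^{-1}$---is the standard device used in those references, so nothing further is needed.
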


\begin{example}\label{canonical} {\rm For $0<\alpha<1$ and $\beta \ge \alpha$, the family $(\psi_{\alpha, \beta-\alpha}(t, (\cdot)))_{t>0}$ is a $(g_{\alpha},g_{\beta})$-regularized resolvent family generated by $A=-{d\over dt}$ on the Banach space $L^1(\R_+)$ (Theorem \ref{propiedades} (ii)); in particular the  stable L\'{e}vy processes $(f_{(\cdot),\alpha}(t))_{t>0}$ are $(g_{\alpha},g_{\alpha})$-regularized resolvent family. Similarly Mittag-Leffer functions $ (t^{\beta-1}E_{\alpha, \beta}(-t^\alpha(\cdot)))_{t\ge 0}$ are $(g_{\alpha},g_{\beta})$-regularized resolvent families generated by $Af(z)=-z f(z)$  on the Banach space $C_0(\R_+)$ (or ${\mathcal H}_0(\C_+)$), see formula (\ref{mittag}). These two families of functions are canonical examples of $(g_{\alpha},g_{\beta})$-regularized resolvent families.
}
\end{example}

Spectral inclusions for regularized resolvent families have been studied in certain particular cases. For example, spectral inclusions for $C_0$-semigroups($(1,1)$-regularized resolvent families in \cite[Chapter IV]{Nagel}) or cosine functions ($(t,1)$-regularized resolvent families in \cite{Nagy}) are well known. Finally, spectral inclusions  for $({t^{\alpha-1}\over \Gamma(\alpha)},1)$-regularized resolvent families ($\alpha>0$) are shown in \cite[Theorem 3.2]{Li-Zheng}.  We denote by $$m_{\alpha,\beta}^{a}(t):=t^{\beta-1}E_{\alpha,\beta}(a t^{\alpha}), \qquad t>0, \qquad a\in \C,$$
for $\alpha, \beta>0$. Note that $m_{\alpha,\beta}^{a}(t)=\widehat{\psi_{\alpha, \beta-\alpha}(t, (\cdot)) }(a)$ for $\beta>\alpha$ and $t>0$ (Theorem \ref{propiedades} (iii)).

\begin{lemma} Suppose that $\{S_{\alpha,\beta}(t)\}_{t>0}$ is a $(g_{\alpha},g_{\beta})$-regularized resolvent family generated by $A\in\mathcal{C}^{\alpha,\beta}(\omega).$ Then the following equalities hold.

\begin{equation}\label{EspecIncl2}
\int_0^{t}m_{\alpha,\alpha}^{a}(t-s)S_{\alpha,\beta}(s)(a-A)x\,ds=m_{\alpha,\beta}^{a}(t)x-S_{\alpha,\beta}(t)x,\qquad x\in D(A).
\end{equation}

\begin{equation}\label{EspecIncl1}
(a-A)\int_0^{t}m_{\alpha,\alpha}^{a}(t-s)S_{\alpha,\beta}(s)x\,ds=m_{\alpha,\beta}^{a}(t)x-S_{\alpha,\beta}(t)x,\qquad x\in X.
\end{equation}

\end{lemma}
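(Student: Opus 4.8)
The plan is to prove both identities by taking Laplace transforms in $t$ and using the characterization of $(g_\alpha,g_\beta)$-regularized resolvent families from Theorem \ref{caracterizacion}. Write $\widehat{S_{\alpha,\beta}}(\lambda)x = \lambda^{\alpha-\beta}(\lambda^\alpha-A)^{-1}x$ for $\lambda>\omega$, and recall from formula (\ref{mittag}) that $\widehat{m_{\alpha,\alpha}^{a}}(\lambda) = (\lambda^\alpha-a)^{-1}$ and $\widehat{m_{\alpha,\beta}^{a}}(\lambda) = \lambda^{\alpha-\beta}(\lambda^\alpha-a)^{-1}$ for $\Re\lambda$ large. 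The left-hand side of (\ref{EspecIncl1}) is a convolution $m_{\alpha,\alpha}^{a}\ast S_{\alpha,\beta}(\cdot)x$ followed by application of the closed operator $a-A$, so its Laplace transform is $(a-A)(\lambda^\alpha-a)^{-1}\lambda^{\alpha-\beta}(\lambda^\alpha-A)^{-1}x$.

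The key algebraic step is the resolvent-type identity
$$
(a-A)(\lambda^\alpha-A)^{-1} = I - (\lambda^\alpha-a)(\lambda^\alpha-A)^{-1},
$$
valid for $\lambda^\alpha\in\rho(A)$, which gives
$$
(a-A)(\lambda^\alpha-a)^{-1}\lambda^{\alpha-\beta}(\lambda^\alpha-A)^{-1}x
= \lambda^{\alpha-\beta}(\lambda^\alpha-a)^{-1}x - \lambda^{\alpha-\beta}(\lambda^\alpha-A)^{-1}x
= \widehat{m_{\alpha,\beta}^{a}}(\lambda)x - \widehat{S_{\alpha,\beta}}(\lambda)x.
$$
Since both sides of (\ref{EspecIncl1}) are continuous (in fact locally integrable, Laplace-transformable) functions of $t$ with values in $X$, uniqueness of the Laplace transform yields the identity for all $t>0$. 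For (\ref{EspecIncl2}) one argues the same way, except that for $x\in D(A)$ the operator $a-A$ may be moved inside the convolution integral, acting on $S_{\alpha,\beta}(s)x$ via property (b) of the definition; taking Laplace transforms gives exactly the same expression, so (\ref{EspecIncl2}) follows as well. Alternatively, (\ref{EspecIncl2}) follows from (\ref{EspecIncl1}) by noting that for $x\in D(A)$ one has $(a-A)\int_0^t m_{\alpha,\alpha}^a(t-s)S_{\alpha,\beta}(s)x\,ds = \int_0^t m_{\alpha,\alpha}^a(t-s)S_{\alpha,\beta}(s)(a-A)x\,ds$, using that $S_{\alpha,\beta}(s)$ commutes with $A$ and that $a-A$ is closed.

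The main technical point to be careful about is the interchange of the closed operator $a-A$ with the Bochner integral (convolution): one must justify that $t\mapsto \int_0^t m_{\alpha,\alpha}^a(t-s)S_{\alpha,\beta}(s)x\,ds$ takes values in $D(A)$ and that $A$ commutes with the integral, which is the standard fact that a closed operator commutes with Bochner integration provided the integrand is integrable and lands in the domain; here this is guaranteed because the Laplace transform of the integral is $(\lambda^\alpha-a)^{-1}\lambda^{\alpha-\beta}(\lambda^\alpha-A)^{-1}x$, which visibly lies in $D(A)$. A secondary point is checking that the various Laplace transforms converge on a common half-plane $\Re\lambda>\omega'$ for suitable $\omega'\geq\max(\omega,|a|^{1/\alpha})$, so that the manipulations are legitimate there; this is routine given $A\in\mathcal{C}^{\alpha,\beta}(\omega)$ and the known abscissa of convergence for $m_{\alpha,\beta}^a$.
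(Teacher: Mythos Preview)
Your proposal is correct and follows essentially the same route as the paper: take Laplace transforms using Theorem~\ref{caracterizacion} and formula~(\ref{mittag}), apply the resolvent-type identity $(a-A)(\lambda^\alpha-A)^{-1}=I-(\lambda^\alpha-a)(\lambda^\alpha-A)^{-1}$, and invoke uniqueness. The only cosmetic difference is the order: the paper first establishes (\ref{EspecIncl2}) for $x\in D(A)$ (where moving $a-A$ inside the convolution is immediate) and then deduces (\ref{EspecIncl1}) for general $x\in X$ by density and closedness of $A$; this sidesteps the technical point you flag about justifying that the convolution lands in $D(A)$ before one can apply $a-A$.
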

\begin{proof} Take $x\in D(A).$ By \eqref{mittag} and Theorem \ref{caracterizacion} we have that \begin{displaymath}\begin{array}{l} \displaystyle\int_0^{\infty}e^{-\lambda t}(\int_0^{t}m_{\alpha,\alpha}^{a}(t-s)S_{\alpha,\beta}(s)(a-A)x\,ds)\,dt=\frac{\lambda^{\alpha-\beta}(\lambda^{\alpha}-A)^{-1}(a-A)x}{\lambda^{\alpha}-a} \\
\displaystyle\qquad=\frac{\lambda^{\alpha-\beta}}{\lambda^{\alpha}-a}x-\lambda^{\alpha-\beta}(\lambda^{\alpha}-A)^{-1}x=\displaystyle\int_0^{\infty}e^{-\lambda t}(m_{\alpha,\beta}^{a}(t)x-S_{\alpha,\beta}(t)x)\,dt,\end{array}\end{displaymath}
 and we obtain \eqref{EspecIncl2} by the uniqueness of Laplace transform. The proof of \eqref{EspecIncl1} follows since $A$ is a closed densely defined operator.
\end{proof}

For a closed operator $A,$ we denote by $\sigma(A),$ $\sigma_{p}(A),$ $\sigma_{r}(A)$ and $\sigma_{a}(A)$ the spectrum, point spectrum, residual spectrum and approximate point spectrum of $A$ respectively. For more details see \cite[Chapter IV]{Nagel}. The proof of the next theorem is inspired in the proof of \cite[Theorem 3.2]{Li-Zheng} and as some parts run parallel we skip them.

\begin{theorem} \label{spectral} Let $\{S_{\alpha,\beta}(t)\}_{t>0}$ be a $(g_{\alpha},g_{\beta})$-regularized resolvent family generated by $A\in\mathcal{C}^{\alpha,\beta}(\omega).$ Then we have \begin{itemize}
\item[(i)] $m_{\alpha,\beta}^{\lambda}(t)\in\sigma(S_{\alpha,\beta}(t)),$ for $\lambda\in\sigma(A).$
\item[(ii)] $m_{\alpha,\beta}^{\lambda}(t)\in\sigma_{p}(S_{\alpha,\beta}(t)),$ for $ \lambda\in\sigma_{p}(A).$
\item[(iii)] $m_{\alpha,\beta}^{\lambda}(t)\in\sigma_{a}(S_{\alpha,\beta}(t)),$ for $ \lambda\in\sigma_ {a}(A).$
\item[(iv)] $m_{\alpha,\beta}^{\lambda}(t)\in\sigma_{r}(S_{\alpha,\beta}(t)),$ for $\lambda\in\sigma_{r}(A).$
\end{itemize}

\end{theorem}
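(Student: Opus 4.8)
The statement to prove is Theorem \ref{spectral}, the spectral inclusion for $(g_\alpha,g_\beta)$-regularized resolvent families. The plan is to leverage the two operator identities \eqref{EspecIncl2} and \eqref{EspecIncl1} from the preceding lemma, which package exactly the information needed: for every fixed $t>0$ they express the operator $m_{\alpha,\beta}^{\lambda}(t) - S_{\alpha,\beta}(t)$ as a product $(\lambda-A)\circ R_t$ (on $X$) and $R_t\circ(\lambda-A)$ (on $D(A)$), where $R_t$ denotes the bounded operator $x\mapsto \int_0^t m_{\alpha,\alpha}^{\lambda}(t-s)S_{\alpha,\beta}(s)x\,ds$, which moreover commutes with $A$ in the appropriate sense because $S_{\alpha,\beta}(t)A\subset AS_{\alpha,\beta}(t)$. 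Thus everything reduces to the standard algebraic fact that if $B - C = (\lambda-A)R = R(\lambda-A)$ on the respective domains with $R$ bounded and commuting with $A$, then invertibility/injectivity/etc.\ of $\lambda - A$ transfers to $B - C$ where $B=m_{\alpha,\beta}^{\lambda}(t)I$ is a scalar multiple of the identity and $C = S_{\alpha,\beta}(t)$.

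For part (i): if $m_{\alpha,\beta}^{\lambda}(t) \in \rho(S_{\alpha,\beta}(t))$, i.e.\ $m_{\alpha,\beta}^{\lambda}(t) - S_{\alpha,\beta}(t)$ is boundedly invertible, then from \eqref{EspecIncl1} the operator $(\lambda-A)$ is surjective (it has the bounded right inverse $R_t\,(m_{\alpha,\beta}^{\lambda}(t)-S_{\alpha,\beta}(t))^{-1}$), and from \eqref{EspecIncl2} it is injective with $(m_{\alpha,\beta}^{\lambda}(t)-S_{\alpha,\beta}(t))^{-1}R_t$ a left inverse on $D(A)$; since $A$ is closed this forces $\lambda\in\rho(A)$. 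Contrapositively $\lambda\in\sigma(A)$ gives $m_{\alpha,\beta}^{\lambda}(t)\in\sigma(S_{\alpha,\beta}(t))$. For (ii): if $Ax=\lambda x$ with $x\neq 0$, plug into \eqref{EspecIncl2}; the left side vanishes because $(\lambda-A)x=0$, so $S_{\alpha,\beta}(t)x = m_{\alpha,\beta}^{\lambda}(t)x$, giving an eigenvector. For (iii): take an approximate eigenvector sequence $x_n\in D(A)$, $\|x_n\|=1$, $(\lambda-A)x_n\to 0$; applying \eqref{EspecIncl2} and the uniform boundedness of $R_t$ on compact $t$-intervals yields $(m_{\alpha,\beta}^{\lambda}(t)-S_{\alpha,\beta}(t))x_n\to 0$, so $m_{\alpha,\beta}^{\lambda}(t)\in\sigma_a(S_{\alpha,\beta}(t))$. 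For (iv): if $\lambda\in\sigma_r(A)$ then $\lambda-A$ is injective with non-dense range; injectivity of $\lambda-A$ combined with \eqref{EspecIncl1} and \eqref{EspecIncl2} shows $m_{\alpha,\beta}^{\lambda}(t)-S_{\alpha,\beta}(t)$ is injective (so $m_{\alpha,\beta}^{\lambda}(t)\notin\sigma_p(S_{\alpha,\beta}(t))$), while $\overline{\mathrm{ran}(\lambda-A)}\neq X$ together with \eqref{EspecIncl1} forces $\overline{\mathrm{ran}(m_{\alpha,\beta}^{\lambda}(t)-S_{\alpha,\beta}(t))}\subseteq \overline{\mathrm{ran}(\lambda-A)}\neq X$; hence $m_{\alpha,\beta}^{\lambda}(t)\in\sigma_r(S_{\alpha,\beta}(t))$.

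Since the referenced proof of \cite[Theorem 3.2]{Li-Zheng} handles the $\beta=1$ case by exactly this bookkeeping, I would state the argument for (i) in full, then remark that (ii)–(iv) follow the same pattern and that the details "run parallel" to \cite[Theorem 3.2]{Li-Zheng}, only indicating the small modifications: the kernel is now $m_{\alpha,\alpha}^{\lambda}$ (independent of $\beta$, which is why the arguments are uniform in $\beta$), and one must check local integrability of $S_{\alpha,\beta}(\cdot)$ near $0$ when $0<\beta<1$ so that the convolution $R_t$ is well defined and strongly continuous in $t$ — this is guaranteed by $A\in\mathcal{C}^{\alpha,\beta}(\omega)$, which builds in $S_{\alpha,\beta}(\cdot)x\in L^1_{loc}(\R_+,X)$.

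\textbf{Main obstacle.} The genuinely delicate point, and the one I would be most careful about, is part (iv): separating the failure of injectivity from the failure of dense range of $m_{\alpha,\beta}^{\lambda}(t)-S_{\alpha,\beta}(t)$ and showing that each is inherited from the corresponding property of $\lambda-A$ — in particular, verifying that the range inclusion $\mathrm{ran}(m_{\alpha,\beta}^{\lambda}(t)-S_{\alpha,\beta}(t))\subseteq\mathrm{ran}(\lambda-A)$ from \eqref{EspecIncl1} really does survive closure, which uses that $\lambda-A$ being injective with closed-or-not range still has $\overline{\mathrm{ran}(\lambda-A)}\neq X$ exactly in the residual case. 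Everything else is routine functional-analytic manipulation of the two lemma identities plus the commutation relation $S_{\alpha,\beta}(t)A\subset AS_{\alpha,\beta}(t)$.
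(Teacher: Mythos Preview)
Your proposal is correct and takes essentially the same approach as the paper: you introduce the convolution operator $R_t$ (the paper calls it $B_\lambda(t)$), prove (i) by contraposition using the two lemma identities \eqref{EspecIncl1}--\eqref{EspecIncl2}, and defer (ii)--(iv) to the parallel argument in \cite[Theorem 3.2]{Li-Zheng}. The only point the paper makes slightly more explicit is the commutation $Q(t)B_\lambda(t)=B_\lambda(t)Q(t)$ (where $Q(t)=(m_{\alpha,\beta}^\lambda(t)I-S_{\alpha,\beta}(t))^{-1}$), which is what identifies the left and right inverses of $\lambda-A$ as the same bounded operator; you allude to this via ``$R_t$ \dots commutes with $A$'', which is equivalent.
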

\begin{proof}  We define the family of operators $$\displaystyle B_{\lambda}(t)x:=\int_0^{t}m_{\alpha,\alpha}^{\lambda}(t-s)S_{\alpha,\beta}(s)\,ds,\qquad x\in X,$$ then $B_{\lambda}(t)\in\mathcal{B}(X)$ for $t>0$. (i) Suppose that $m_{\alpha,\beta}^{\lambda}(t)\in\rho(S_{\alpha,\beta}(t))$  and we denote by $Q(t):=(m_{\alpha,\beta}^{\lambda}(t)I-S_{\alpha,\beta}(t))^{-1}$ for $t>0$. By \eqref{EspecIncl1} and \eqref{EspecIncl2} we get that $$(\lambda-A)B_{\lambda}(t)Q(t)x=x,\qquad x\in X,\quad t>0,$$ and $$Q(t)B_{\lambda}(t)(\lambda-A)x=x,\qquad x\in D(A),\quad t>0.$$ Using that $Q(t)B_{\lambda}(t)=B_{\lambda}(t)Q(t)$ we have $(\lambda-A)^{-1}=B_{\lambda}(t)Q(t)\in\mathcal{B}(X),\quad t>0$ which implies $\lambda\in\rho(A).$ Parts (ii), (iii) and (iv) are similar to the proof of \cite[Theorem 3.2]{Li-Zheng}.
\end{proof}

The next theorem is the main one of this paper.

\begin{theorem}\label{main} Let $0<\eta_1\leq 2,$, $0<{\eta_2}$ and $\omega\geq 0.$ If $A\in \mathcal{C}^{\eta_1,\eta_2}(\omega)$ generates a $(g_{\eta_1},g_{\eta_2})$-regularized resolvent family $\{S_{\eta_1,\eta_2}(t)\}_{t> 0}$ then $A\in\mathcal{C}^{\alpha\eta_1,\alpha\eta_2+\beta}(\omega^{\frac{1}{\alpha}})$ generates the following $(g_{\alpha\eta_1},g_{\alpha\eta_2+\beta})$-regularized resolvent family \begin{equation}\label{formulasub}S_{\alpha\eta_1,\alpha\eta_2+\beta}(t)x:=\int_0^{\infty}\psi_{\alpha,\beta}(t,s)S_{\eta_1,\eta_2}(s)x\,ds,\qquad t>0,\quad x\in X,\end{equation}
for $0<\alpha<1$ and $\beta \ge 0.$ Moreover the following equality holds
\begin{equation}\label{formulasub2}S_{\alpha\eta_1,\alpha\eta_2+\beta}(t)x=(g_{\beta}*S_{\alpha\eta_1,\alpha\eta_2})(t)x,\qquad t>0, \quad x\in X,
\end{equation}
for $\beta>0.$

\end{theorem}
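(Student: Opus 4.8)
The plan is to verify the three defining properties (a), (b), (c) of a $(g_{\alpha\eta_1},g_{\alpha\eta_2+\beta})$-regularized resolvent family for the operator family defined by (\ref{formulasub}), using the characterization via Laplace transforms in Theorem \ref{caracterizacion}. First I would establish that the Bochner integral in (\ref{formulasub}) is well-defined: since $\{S_{\eta_1,\eta_2}(t)\}_{t>0}$ is Laplace transformable of parameter $\omega$ and strongly continuous, while $\psi_{\alpha,\beta}(t,\cdot)\ge 0$ with known exponential behaviour in the second variable (Theorem \ref{propiedades}(iii) controls $\int_0^\infty e^{\lambda s}\psi_{\alpha,\beta}(t,s)\,ds$), the integral converges absolutely for each $t>0$, and $t\mapsto S_{\alpha\eta_1,\alpha\eta_2+\beta}(t)x$ is strongly continuous by dominated convergence together with the integral representation of $\psi_{\alpha,\beta}$. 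The commutation property (b), $S_{\alpha\eta_1,\alpha\eta_2+\beta}(t)A\subset AS_{\alpha\eta_1,\alpha\eta_2+\beta}(t)$, follows immediately from property (b) for $S_{\eta_1,\eta_2}$ and the closedness of $A$ applied to the Bochner integral.

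The core computation is the Laplace transform identity. For $\lambda>\omega^{1/\alpha}$ (equivalently $\lambda^\alpha>\omega$), I would compute
\begin{equation*}
\int_0^\infty e^{-\lambda t}S_{\alpha\eta_1,\alpha\eta_2+\beta}(t)x\,dt=\int_0^\infty e^{-\lambda t}\int_0^\infty \psi_{\alpha,\beta}(t,s)S_{\eta_1,\eta_2}(s)x\,ds\,dt,
\end{equation*}
and apply Fubini's theorem (justified by the absolute convergence just discussed and the exponential bound on $S_{\eta_1,\eta_2}$). The inner $t$-integral is $\int_0^\infty e^{-\lambda t}\psi_{\alpha,\beta}(t,s)\,dt=\lambda^{-\beta}e^{-\lambda^\alpha s}$ by Theorem \ref{propiedades}(ii), so the expression becomes
\begin{equation*}
\lambda^{-\beta}\int_0^\infty e^{-\lambda^\alpha s}S_{\eta_1,\eta_2}(s)x\,ds=\lambda^{-\beta}(\lambda^\alpha)^{\eta_1-\eta_2}((\lambda^\alpha)^{\eta_1}-A)^{-1}x,
\end{equation*}
using Theorem \ref{caracterizacion} for $A\in\mathcal{C}^{\eta_1,\eta_2}(\omega)$. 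Simplifying the powers of $\lambda$ gives exactly $\lambda^{\alpha\eta_1-(\alpha\eta_2+\beta)}(\lambda^{\alpha\eta_1}-A)^{-1}x$, which is the Laplace transform required by Theorem \ref{caracterizacion} for $A\in\mathcal{C}^{\alpha\eta_1,\alpha\eta_2+\beta}(\omega^{1/\alpha})$. Together with the strong continuity and the normalization $\lim_{t\to 0^+}S_{\alpha\eta_1,\alpha\eta_2+\beta}(t)x/g_{\alpha\eta_2+\beta}(t)=x$ — which I would deduce either from Theorem \ref{propiedades}(vi) (the identity $\int_0^\infty g_\eta(s)\psi_{\alpha,\beta}(t,s)\,ds=g_{\alpha\eta+\beta}(t)$ applied formally with the resolvent asymptotics, or from a Tauberian/Post–Widder argument on the Laplace transform) — Theorem \ref{caracterizacion} yields that (\ref{formulasub}) is the generated $(g_{\alpha\eta_1},g_{\alpha\eta_2+\beta})$-regularized resolvent family.

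For the final identity (\ref{formulasub2}), when $\beta>0$ I would use Theorem \ref{propiedades}(v), namely $\psi_{\alpha,\beta}(t,s)=(g_\beta*\psi_{\alpha,0}(\cdot,s))(t)$, substitute it into (\ref{formulasub}), and interchange the convolution integral with the $s$-integral (again by Fubini, using positivity of $g_\beta$ and $\psi_{\alpha,0}$) to get $S_{\alpha\eta_1,\alpha\eta_2+\beta}(t)x=(g_\beta*S_{\alpha\eta_1,\alpha\eta_2}(\cdot)x)(t)$, recognizing the inner integral as $S_{\alpha\eta_1,\alpha\eta_2}(\cdot)x$ via (\ref{formulasub}) with $\beta=0$. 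Alternatively, and perhaps more cleanly, this follows directly from the Laplace transform: multiplying the transform of $S_{\alpha\eta_1,\alpha\eta_2}$ by $\widehat{g_\beta}=\lambda^{-\beta}$ produces the transform of $S_{\alpha\eta_1,\alpha\eta_2+\beta}$, and uniqueness of the Laplace transform gives (\ref{formulasub2}).

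The main obstacle I anticipate is the rigorous justification of the normalization condition (a) at $t\to 0^+$, since $g_{\alpha\eta_2+\beta}$ is singular at $0$ when $\alpha\eta_2+\beta<1$ and one cannot simply pass $t\to 0$ inside the integral; this requires an asymptotic analysis of the Laplace transform $\lambda^{\alpha\eta_1-(\alpha\eta_2+\beta)}(\lambda^{\alpha\eta_1}-A)^{-1}x$ as $\lambda\to\infty$, where $(\lambda^{\alpha\eta_1}-A)^{-1}x\sim \lambda^{-\alpha\eta_1}x$, combined with a Tauberian theorem or the Post–Widder inversion formula — the same technical machinery that underlies Theorem \ref{caracterizacion} itself, so it should be available. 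The Fubini interchanges are routine given the exponential bounds, and the restriction $\eta_1\le 2$ plays no role in the computation but is the natural range in which $(g_{\eta_1},g_{\eta_2})$-families are of interest (ensuring, e.g., consistency with the cosine-function/second-order case).
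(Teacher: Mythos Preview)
Your approach is essentially the same as the paper's: both rely on Theorem \ref{caracterizacion} and the Laplace transform computation via Theorem \ref{propiedades}(ii), and both derive (\ref{formulasub2}) from Theorem \ref{propiedades}(v). Your verification of the commutation property (b) is redundant, since Theorem \ref{caracterizacion} delivers the full $(g_{\alpha\eta_1},g_{\alpha\eta_2+\beta})$-regularized structure once the Laplace transform identity and the normalization at $0$ are in hand.

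The one place your plan is vague is exactly the point you flag as the obstacle, the limit $\lim_{t\to 0^+}S_{\alpha\eta_1,\alpha\eta_2+\beta}(t)x/g_{\alpha\eta_2+\beta}(t)=x$. The paper does \emph{not} resort to Tauberian or Post--Widder machinery here; instead it uses a direct dominated convergence argument after the scaling substitution $s=ut^{\alpha}$. Since $\psi_{\alpha,\beta}(t,ut^{\alpha})=t^{\beta-1}W_{-\alpha,\beta}(-u)$ and $g_{\eta_2}(u)/g_{\eta_2}(ut^{\alpha})=t^{\alpha-\alpha\eta_2}$, one obtains
\[
\frac{\lVert S_{\alpha\eta_1,\alpha\eta_2+\beta}(t)x-g_{\alpha\eta_2+\beta}(t)x\rVert}{g_{\alpha\eta_2+\beta}(t)}
\le \Gamma(\alpha\eta_2+\beta)\int_0^{\infty} g_{\eta_2}(u)\,W_{-\alpha,\beta}(-u)\,
\frac{\lVert S_{\eta_1,\eta_2}(ut^{\alpha})x-g_{\eta_2}(ut^{\alpha})x\rVert}{g_{\eta_2}(ut^{\alpha})}\,du,
\]
where the integrand tends to zero pointwise in $u$ by condition (a) for $S_{\eta_1,\eta_2}$, and is dominated by an integrable function thanks to Proposition \ref{W}(i) (equivalently Theorem \ref{propiedades}(vi)). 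This elementary self-similarity trick is the missing ingredient in your outline; with it, no abstract Tauberian theorem is needed and the proof is complete.
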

\begin{proof} First we show  that $\{S_{\alpha\eta_1,\alpha\eta_2+\beta}(t)\}_{t>0}$ is Laplace transformable of parameter $\omega^{\frac{1}{\alpha}}.$
 Note that using Proposition \ref{propiedades} (ii) we get that \begin{eqnarray*}\int_0^{\infty} e^{-\lambda t} S_{\alpha\eta_1,\alpha\eta_2+\beta}(t)x\, dt&=&\int_0^{\infty} (\int_0^{\infty}e^{-\lambda t} \psi_{\alpha,\beta}(t,s)\,dt) S_{\eta_1,\eta_2}(s)x\,ds\\&=&\lambda^{-\beta}\int_0^{\infty}e^{-\lambda^{\alpha s}}S_{\eta_1,\eta_2}(s)x\,ds
=\lambda^{\alpha\eta_1-(\alpha\eta_2+\beta)}(\lambda^{\alpha\eta_1}-A)^{-1}x,\end{eqnarray*} for $\lambda> \omega^{\frac{1}{\alpha}},$ and $(\omega^{\eta_1},\infty)=((\omega^{\frac{1}{\alpha}})^{\alpha\eta_1},\infty)\subset \rho(A).$

The family $\{S_{\alpha\eta_1,\alpha\eta_2+\beta}(t)\}_{t>0}$ is strongly continuous on $(0, \infty)$ and now we prove the strong continuity at the origin. Let $x\in X,$ then \begin{eqnarray*}
&\quad&\frac{\lVert S_{\alpha\eta_1,\alpha\eta_2+\beta}(t)x-g_{\alpha\eta_2+\beta}(t)x\rVert}{g_{\alpha\eta_2+\beta}(t)} \leq\displaystyle\int_0^{\infty}\frac{\psi_{\alpha,\beta}(t,s)}{g_{\alpha\eta_2+\beta}(t)}\lVert S_{\eta_1,\eta_2}(s)x-g_{\eta_2}(s)x\rVert\,ds \\
&\quad&\qquad\qquad=\displaystyle\Gamma(\alpha\eta_2+\beta)\int_0^{\infty}t^{\alpha-\alpha\eta_2} W_{-\alpha,\beta}(-u)\lVert S_{\eta_1,\eta_2}(ut^{\alpha})x-g_{\eta_2}(ut^{\alpha})x\rVert\,du \\
&\quad&\qquad\qquad=\displaystyle\Gamma(\alpha\eta_2+\beta)\int_0^{\infty}\frac{g_{\eta_2}(u)W_{-\alpha,\beta}(-u)}{g_{\eta_2}(ut^{\alpha})}\lVert S_{\eta_1,\eta_2}(ut^{\alpha})x-g_{\eta_2}(ut^{\alpha})x\rVert\,du, \\
\end{eqnarray*} where we have used Theorem \ref{propiedades} (vi). We apply the dominated convergence theorem to the above term and  we conclude that
$$\frac{\lVert S_{\alpha\eta_1,\alpha\eta_2+\beta}(t)x-g_{\alpha\eta_2+\beta}(t)x\rVert}{g_{\alpha\eta_2+\beta}(t)}  \to 0, \qquad t\to 0^+,$$ since $S_{\eta_1,\eta_2}(t)$ is a $(g_{\eta_1},g_{\eta_2})$-regularized resolvent family and Proposition \ref{W} (i).
Finally, by Theorem \ref{caracterizacion}, we obtain that the family $\{S_{\alpha\eta_1,\alpha\eta_2+\beta}(t)\}_{t>0}$ is a  $(g_{\alpha\eta_1},g_{\alpha\eta_2+\beta})$-regularized resolvent family generated by $A$. The proof of the  equality (\ref{formulasub2}) is a straightforward consequence of Theorem \ref{propiedades} (v).
\end{proof}

\begin{remark}\label{others} {\rm  Note that the above subordination theorem extends some subordination results which have been considered in this and previous papers: \begin{enumerate}
\item[(i)] Now, we consider the family of functions $(\psi_{\gamma, \delta-\gamma}(t, (\cdot)))_{t>0}$ which is a $(g_{\gamma},g_{\delta})$-regu\-larized resolvent family for $0<\gamma<1$ and $\delta\ge \gamma$ (see Example \ref{canonical}). Then we apply Theorem \ref{main} for $0<\alpha<1$ and $\beta>0$, and we obtain the formula
$$
\psi_{\alpha\gamma, \beta +\alpha(\delta-\gamma)}(t,(\cdot))=\int_0^\infty \psi_{\alpha, \beta}(t,s)\psi_{\gamma, \delta-\gamma}(s,(\cdot))\,ds, \qquad t>0,
$$
which is shown in Theorem \ref{bonito}.

\item[(ii)] If $0<\eta_1\leq 2$ and $\eta_2=1$ in Theorem \ref{main}, we retrieve the subordination principle for $(g_{\alpha\eta_1},1)$-regularized resolvent families given in   \cite[Theorem 3.1]{Bazhlepaper} and \cite[Theorem 3.1]{Bajlekova} for $0<\alpha<1$ and $\beta=1-\alpha$.

\item[(iii)] By \cite[Theorem 2.8 (i)]{Kostic}, given a $(g_{\eta_1,\eta_2})$-regularized family generated by $A,$ we obtain a $(g_{\gamma_1},g_{\gamma_2})$-regularized resolvent family generated by $A,$ where $0<\gamma_1<\eta_1$ and $\gamma_2=1- {\gamma_1\over \eta_1}+ {\gamma_1\over \eta_1}\eta_2$. This is  a particular case of  Theorem \ref{main}  for $\alpha={\gamma_1\over \eta_1}$ and $\beta= 1-{\gamma_1\over \eta_1}$.

\end{enumerate}
}
\end{remark}

\begin{corollary}\label{corollary} {\rm
Let $0<\eta\leq 2,$ and $0<\alpha<1$. If $\{S_{\eta,\eta}(t)\}_{t> 0}$ is a $(g_{\eta},g_{\eta})$-regularized resolvent family generated by $A\in\mathcal{C}^{\eta,\eta}(\omega),$ then $A\in\mathcal{C}^{\alpha\eta,\alpha\eta}(\omega^{\frac{1}{\alpha}})$ generates the following $(g_{\alpha\eta},g_{\alpha\eta})$-regularized resolvent family $$S_{\alpha\eta,\alpha\eta}(t)x:=\int_0^{\infty}\psi_{\alpha,0}(t,s)S_{\eta,\eta}(s)x\,ds,\qquad t>0,\quad x\in X.$$ In particular, we have the following remarkable particular cases.

    \begin{itemize}
    \item[(i)] If $A$ generates a $C_0$-semigroup $\{T(t)\}_{t>0}$, i.e. a $(g_1,g_1)$-regularized resolvent family, satisfying $\lVert T(t)\rVert\leq Me^{\omega t},$ $t\geq 0,$ then $A\in\mathcal{C}^{\alpha,\alpha}(\omega^{\frac{1}{\alpha}})$ generates the following $(g_{\alpha},g_{\alpha})$-regularized resolvent family $$S_{\alpha,\alpha}(t)x:=\int_0^{\infty}\psi_{\alpha,0}(t,s)T(s)x\,ds,\qquad t>0,\quad x\in X.$$

\item[(ii)]Let $\beta\in (1,2]$ and $\{S_{\beta,\beta}(t)\}_{t> 0}$ be a $(g_{\beta},g_{\beta})$-regularized resolvent family generated by $A\in\mathcal{C}^{\beta,\beta}(\omega).$ Then $A\in\mathcal{C}^{1,1}(\omega^{\beta})$ generates the following $C_0$-semigroup, $$T(t)x:=\int_0^{\infty}\psi_{\frac{1}{\beta},0}(t,s)S_{\beta,\beta}(s)x\,ds, \qquad t>0,\quad x\in X.$$
In the case that $A$ is the generator of a exponentially bounded sine function $\lVert S(t)\rVert\leq Me^{\omega t},$ $(t\geq 0)$  (see details in \cite[Section 3.15]{ABHN}), i.e. $(S(t))_{t>0}$ is a $(g_2,g_2)$-regularized resolvent family, then $A$ generates the following $C_0$-semigroup, $$T(t)x:=
     {1\over 2\sqrt{\pi}}t^{-3\over 2}\int_0^{\infty}se^{-s^2\over 4t} S(s)xds,\qquad t>0,\quad x\in X,$$ with exponential bound $\omega^{2},$ which is proved in \cite[Theorem 5.2]{Arendt} and see also \cite[Proposition 2.5 and Theorem 2.6]{Keyantuo}.
\end{itemize}
}
\end{corollary}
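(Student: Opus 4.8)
The plan is to deduce Corollary \ref{corollary} directly from Theorem \ref{main} by specializing the parameters, and then to identify each displayed formula with a known classical one. First I would set $\eta_1=\eta_2=\eta$ in Theorem \ref{main} and take $\beta=0$; since the hypothesis $0<\eta_1\le 2$ becomes $0<\eta\le 2$ and $0<\alpha<1$, all hypotheses of Theorem \ref{main} are met. The conclusion then reads: $A\in\mathcal{C}^{\alpha\eta,\alpha\eta}(\omega^{1/\alpha})$ generates the $(g_{\alpha\eta},g_{\alpha\eta})$-regularized resolvent family
$$
S_{\alpha\eta,\alpha\eta}(t)x=\int_0^\infty \psi_{\alpha,0}(t,s)S_{\eta,\eta}(s)x\,ds,\qquad t>0,\ x\in X,
$$
which is precisely the displayed statement of the corollary. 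Note that here $\beta=0$ is permitted in \eqref{formulasub} (the theorem allows $\beta\ge 0$), so no extra argument is needed; only the second conclusion \eqref{formulasub2} of Theorem \ref{main}, which requires $\beta>0$, is unavailable, but it is not asserted in the corollary.

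Next I would treat the two itemized special cases. For (i), a $C_0$-semigroup $\{T(t)\}_{t>0}$ with $\|T(t)\|\le Me^{\omega t}$ is exactly a $(g_1,g_1)$-regularized resolvent family generated by $A\in\mathcal{C}^{1,1}(\omega)$ (this is recalled after the definition of $(a,k)$-regularized families in the introduction and in Example \ref{canonical}); so applying the already-established case of the corollary with $\eta=1$ yields $A\in\mathcal{C}^{\alpha,\alpha}(\omega^{1/\alpha})$ and the stated formula with $S_{\eta,\eta}=T$. For (ii) with general $\beta\in(1,2]$, I would apply the corollary with $\eta=\beta$ and $\alpha=1/\beta\in[1/2,1)\subset(0,1)$, so that $\alpha\eta=1$; this gives $A\in\mathcal{C}^{1,1}(\omega^{\beta})$, i.e. $A$ generates a $C_0$-semigroup with exponential bound $\omega^\beta$, via
$$
T(t)x=\int_0^\infty \psi_{1/\beta,0}(t,s)S_{\beta,\beta}(s)x\,ds.
$$
For the sine-function subcase, an exponentially bounded sine function $(S(t))_{t>0}$ with $\|S(t)\|\le Me^{\omega t}$ is a $(g_2,g_2)$-regularized resolvent family generated by some $A\in\mathcal{C}^{2,2}(\omega)$; taking $\beta=2$ in the previous step gives $\alpha=1/2$, so $\psi_{1/2,0}(t,s)=\tfrac{1}{2\sqrt\pi}t^{-3/2}s\,e^{-s^2/4t}$ by the explicit formula recorded right after \eqref{Levi}, and substituting this into the integral produces the stated semigroup formula with exponential bound $\omega^{2}$.

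The only genuinely non-formal point is the cross-identification of the sine-function formula with the classical result of \cite[Theorem 5.2]{Arendt}: one must check that the normalization of the sine function used there (so that $u(t)=S(t)x$ solves $u''=Au$, $u(0)=0$, $u'(0)=x$, matching \cite[Section 3.15]{ABHN}) coincides with the $(g_2,g_2)$-regularized resolvent family normalization, i.e. that condition (a) with $\beta=2$, namely $\lim_{t\to0^+}S(t)x/g_2(t)=\lim_{t\to0^+}S(t)x/t=x$, is exactly the sine-function initial condition $S'(0)=I$. This is immediate, so I expect this step to be routine; the main (and really the only) content of the proof is the bookkeeping of which $(g_{\alpha},g_{\beta})$ family corresponds to a $C_0$-semigroup, a sine function, etc., together with the explicit value of $\psi_{1/2,0}$. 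I would therefore present the proof as: apply Theorem \ref{main} with $\eta_1=\eta_2=\eta$, $\beta=0$; then specialize $\eta=1$ for (i) and $\eta=\beta$, $\alpha=1/\beta$ for (ii), invoking the identification of $C_0$-semigroups and sine functions as particular regularized resolvent families and the explicit expression for $\psi_{1/2,0}$.
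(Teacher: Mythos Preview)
Your proposal is correct and matches the paper's approach: the corollary is stated without proof in the paper, as it is an immediate specialization of Theorem \ref{main} with $\eta_1=\eta_2=\eta$ and $\beta=0$, followed by the identifications $\eta=1$ for (i) and $\eta=\beta$, $\alpha=1/\beta$ for (ii), together with the explicit value of $\psi_{1/2,0}$ recorded after \eqref{Levi}. Your bookkeeping of the parameter choices and the identification of $C_0$-semigroups and sine functions as $(g_1,g_1)$- and $(g_2,g_2)$-regularized resolvent families is exactly what is intended.
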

\section{Applications, examples and final comments}

\setcounter{theorem}{0}
\setcounter{equation}{0}

In this last section we present some applications of our results to fractional Cauchy problems and classical semigroups in Lebesgue spaces. Now we recall some basic definitions on fractional Cauchy problems. Let $f\in C_{c}^{(\infty)}(\R_+;X),$ we call Riemann-Liouville fractional integral of $f$  of order $\gamma>0$, $I^{-\gamma}f$  to the function given by $$I^{-\gamma}f(t):=g_{\gamma}*f(t)=\int_0^t{(t-s)^{\gamma-1}\over \Gamma(\gamma)}f(s)\,ds,\qquad t\geq 0,$$ and the Riemann-Liouville fractional derivative of $f$  of order $\gamma>0$ is given by $$_RD_t^{\gamma}f(t):=\frac{d^n}{dt^n}\left(I^{-(n-\gamma)}f\right)(t),\qquad t\geq 0,$$ and with $n=[\gamma] +1.$ Also, we consider the Caputo fractional derivative of $f$ of order $\gamma>0,$ $$_CD_t^{\gamma}f(t):=g_{n-\gamma}*f^{(n)}(t)=I^{-(n-\gamma)}f^{(n)}(t),\qquad t\geq 0,$$ with $n=[\gamma] +1,$  see for example \cite{Mainardi, Miller} and \cite[Section 4]{Li}. Note that in the above definitions, the function $f$ can be taken in a larger space than $C_{c}^{(\infty)}(\R_+)$ where the definitions make sense.

\subsection{Fractional powers in fractional Cauchy problems}

 The next results extend \cite[Theorem 4.9 (a), (c)]{Li}: for $0<\alpha<1,$ the solutions of Caputo fractional problems are $(g_{\alpha},1)$-regularized resolvent families (see \cite[Definition 2.3]{Bajlekova}) which may be obtained by integration from $(g_{\alpha},g_{\alpha})$-regularized resolvent families by \eqref{formulasub2}. Note that these are solutions of Riemann-Liouville fractional problems, see \cite[Theorem 1.1]{Mei-Pe-Zh13}.

\begin{theorem}\label{cauchy}Let $\alpha, \gamma\in (0,1)$ and $A$ the generator of a uniformly bounded $C_0$-semigroup $\{T(t)\}_{t>0}\subset {\mathcal B}(X).$

\begin{itemize}
\item[(i)] The fractional Cauchy problem
\begin{equation}\label{cauchy1}
\left\{\begin{array}{ll}
_RD_t^{\alpha}v(t)=Av(t),&t>0, \\ \\
(g_{1-\alpha}*v)(0)=x\in D(A),&
\end{array} \right.
\end{equation}
is well-posed and its unique solution is given by $$v(t)=\displaystyle\int_0^{\infty}\psi_{\alpha,0}(t,s)T(s)x\,ds,\qquad t>0.$$

\item[(ii)] The fractional Cauchy problem
\begin{equation*}
\left\{\begin{array}{ll}
_RD_t^{\gamma}v(t)=-(-A)^{\alpha}v(t),&t>0, \\ \\
(g_{1-\gamma}*v)(0)=x\in D(A),&
\end{array} \right.
\end{equation*}
is well-posed and its unique solution is given by $$v(t)=\displaystyle\int_0^{\infty}\Psi_{\gamma,\alpha}(t,s)T(s)x\,ds,\qquad t>0.$$

%\item[(iv)]Let $\beta\in (1,2]$ and $A\in\mathcal{C}^{\beta,\beta}(0).$ The first order Cauchy problem (\ref{cauchy2}) is well-posed and its unique solution is given by $$u(t)=\displaystyle\int_0^{\infty}\psi_{\frac{1}{\beta},0}(t,s)v(s)ds,\qquad t>0,$$ for each $x\in D(A),$ where $v$ is the solution to the fractional Cauchy problem \begin{equation}\label{cauchy4}
%\left\{\begin{array}{ll}
%_RD_t^{\beta}v(t)=Av(t),&t>0, \\ \\
%(g_{2-\gamma}*v)(0)=0,\ (g_{2-\gamma}*v)'(0)=x.&
%\end{array} \right.\end{equation}

\end{itemize}
\end{theorem}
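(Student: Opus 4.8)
The strategy is to reduce everything to the subordination machinery of Theorem \ref{main} together with the identification of canonical resolvent families from Example \ref{canonical}. The starting point is that, since $A$ generates a uniformly bounded $C_0$-semigroup $\{T(t)\}_{t>0}$, we have $A\in\mathcal{C}^{1,1}(0)$; that is, $\{T(t)\}_{t>0}$ is a $(g_1,g_1)$-regularized resolvent family with Laplace transform $(\lambda-A)^{-1}$. I would then invoke Theorem \ref{main} with $\eta_1=\eta_2=1$ and $\beta=0$ (the case $\beta=0$ being admissible since $\beta\ge 0$ is allowed) to conclude that $A\in\mathcal{C}^{\alpha,\alpha}(0)$ generates the $(g_\alpha,g_\alpha)$-regularized resolvent family
\[
S_{\alpha,\alpha}(t)x=\int_0^\infty \psi_{\alpha,0}(t,s)T(s)x\,ds,\qquad t>0,
\]
which is precisely the candidate solution in part (i). It then remains to translate ``$(g_\alpha,g_\alpha)$-regularized resolvent family generated by $A$'' into ``unique well-posed solution of the Riemann--Liouville problem \eqref{cauchy1}''. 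For this I would appeal to the known equivalence between $(g_\alpha,g_\alpha)$-regularized families and solutions of Riemann--Liouville fractional Cauchy problems with the initial condition written in the convolved form $(g_{1-\alpha}*v)(0)=x$, citing \cite[Theorem 1.1]{Mei-Pe-Zh13}; the integral equation (c) in the definition of $S_{\alpha,\alpha}$ is exactly the integrated form of $_RD_t^\alpha v=Av$, and strong continuity plus the limit condition $\lim_{t\to 0^+}S_{\alpha,\alpha}(t)x/g_\alpha(t)=x$ encode the initial datum. Uniqueness follows from the injectivity of the Laplace transform (equivalently, from the uniqueness clause in Theorem \ref{caracterizacion}).

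For part (ii) the plan is to apply the subordination in two stages and identify the kernel as $\Psi_{\gamma,\alpha}$. First, by the fractional-power subordination formula \eqref{yosida} (or, equivalently, by Theorem \ref{main} applied to $\{T(t)\}$ with $\eta_1=\eta_2=1$, $\alpha$ replaced by $\alpha$, $\beta=0$, noting that the resulting generator is $-(-A)^\alpha$), the operator $-(-A)^\alpha$ generates the $(g_\alpha,g_\alpha)$-regularized resolvent family $S_{\alpha,\alpha}(s)x=\int_0^\infty\psi_{\alpha,0}(s,u)T(u)x\,du$ and in particular the bounded analytic semigroup $T^{(\alpha)}$. Actually the cleanest route is: $-(-A)^\alpha$ generates the $C_0$-semigroup $T^{(\alpha)}(s)x=\int_0^\infty f_{s,\alpha}(u)T(u)x\,du=\int_0^\infty \psi_{\alpha,0}(s,u)T(u)x\,du$, so $-(-A)^\alpha\in\mathcal{C}^{1,1}(0)$. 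Then apply Theorem \ref{main} to this semigroup with $\eta_1=\eta_2=1$, exponent $\gamma$ in place of $\alpha$, and $\beta=0$, obtaining that $-(-A)^\alpha\in\mathcal{C}^{\gamma,\gamma}(0)$ generates
\[
S_{\gamma,\gamma}(t)x=\int_0^\infty \psi_{\gamma,0}(t,s)\,T^{(\alpha)}(s)x\,ds
=\int_0^\infty\psi_{\gamma,0}(t,s)\int_0^\infty\psi_{\alpha,0}(s,u)T(u)x\,du\,ds.
\]
Interchanging the order of integration (justified by Tonelli, since $\psi_{\gamma,0},\psi_{\alpha,0}\ge 0$ by Theorem \ref{propiedades}(i) and $\{T(t)\}$ is bounded) and using the very definition of $\Psi_{\gamma,\alpha}$ in Corollary \ref{PsiCapital}, this equals $\int_0^\infty\Psi_{\gamma,\alpha}(t,u)T(u)x\,du$, which is the claimed solution. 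Well-posedness and uniqueness of the Riemann--Liouville problem $_RD_t^\gamma v=-(-A)^\alpha v$, $(g_{1-\gamma}*v)(0)=x$ then follow exactly as in part (i), since this is nothing but the $(g_\gamma,g_\gamma)$-regularized resolvent family generated by $-(-A)^\alpha$.

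The main obstacle I anticipate is not the subordination identities themselves—those are immediate from Theorem \ref{main} and Corollary \ref{PsiCapital}—but rather the careful bookkeeping of what ``well-posed with unique solution'' means for a Riemann--Liouville problem whose initial condition is imposed on $g_{1-\alpha}*v$ rather than on $v$ itself, and checking that the $(g_\alpha,g_\alpha)$-regularized resolvent family really does furnish a genuine strong solution for $x\in D(A)$ (so that $_RD_t^\alpha v(t)$ exists classically and equals $Av(t)$). This requires differentiating the integrated identity (c) and using $S_{\alpha,\alpha}(t)A\subset AS_{\alpha,\alpha}(t)$ from condition (b); the regularity needed to pass the Riemann--Liouville derivative inside is exactly where one invokes $x\in D(A)$ and the cited result \cite[Theorem 1.1]{Mei-Pe-Zh13} (and \cite[Theorem 4.9]{Li} for the semigroup-subordination form), so I would lean on those rather than reproving them. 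A minor secondary point is justifying the Fubini interchange in part (ii), which is routine given positivity of the Wright kernels and boundedness of $T$.
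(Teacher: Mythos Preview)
Your proposal is correct and matches the paper's proof essentially line for line: part (i) via Corollary \ref{corollary}(i) (your direct appeal to Theorem \ref{main} with $\eta_1=\eta_2=1$, $\beta=0$ is exactly that corollary) together with \cite[Theorem 1.1]{Mei-Pe-Zh13}, and part (ii) via Yosida's construction of $T^{(\alpha)}$, then part (i) applied to $-(-A)^\alpha$, Fubini, and the definition of $\Psi_{\gamma,\alpha}$ from Corollary \ref{PsiCapital}. One small slip worth flagging: your parenthetical ``or, equivalently, by Theorem \ref{main} applied to $\{T(t)\}$ \dots\ noting that the resulting generator is $-(-A)^\alpha$'' is not right---Theorem \ref{main} preserves the generator $A$, it does not produce fractional powers of it---but you immediately abandon this for the Yosida route, which is the correct one and the one the paper uses.
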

\begin{proof}
(i) By Corollary \ref{corollary} (i), the operator $A$ generates a $(g_\alpha, g_\alpha)$-regularized resolvent family which provides the solution of the fractional Cauchy problem (\ref{cauchy1}), see \cite[Theorem 1.1]{Mei-Pe-Zh13}. (ii) By \cite[Chapter IX]{Yosida}, the operator $-(-A)^\alpha$ generates a $C_0$-semigroup $\{T^{(\alpha)}(t)\}_{t>0}$ given by (\ref{yosida}). By  part (i), Fubini theorem and the definition of $\Psi_{\gamma,\alpha}$ in Corollary \ref{PsiCapital}, we conclude the proof.
\end{proof}

\subsection{Convolution semigroups on $L^p(\R^n)$} Let $\{T(t)\}_{t\geq 0}$ be a uniformly bounded convolution semigroup in $\R^n$ generated by $A,$ that is,
$$T(t)f(x)=(k_{t}*f)(x)=\int_{\R^n}k_{t}(x-y)f(y)\,dy, \qquad t>0, x\in \R^n.$$ Two well known examples of convolution semigroups are Gaussian and Poisson semigroups, $g_{t}(x)=\frac{1}{(4\pi t)^{\frac{n}{2}}}e^{-\frac{|x|^2}{4t}}$ and $p_{t}(x)=\frac{\Gamma(\frac{n+1}{2})}{\pi^{\frac{n+1}{2}}}\frac{t}{(t^2+|x|^2)^{\frac{n+1}{2}}},$ whose generators are the Laplacian $\Delta$ and $-(-\Delta)^{\frac{1}{2}}$ respectively, see for example \cite[Chapter IX]{Yosida}. Then by Theorem \ref{cauchy} (i), the solution of the Riemann-Liouville fractional diffusion problem of order $0<\alpha<1$  \begin{equation*}
\left\{\begin{array}{ll}
_RD_t^{\alpha}u(t,x)=A u(t,x),&t>0, \\ \\
(g_{1-\alpha}*u(\cdot,x))(0)=f(x),&
\end{array} \right.
\end{equation*}
is given by $$u(t,x)=\displaystyle\int_{\R^n}\biggl( \int_0^{\infty}\psi_{\alpha,0}(t,s)k_{s}(x-y)\,ds \biggr)f(y)\,dy.$$ In addition, to the particular case of the Laplacian, we obtain the solution of the Caputo fractional diffusion problem of order $0<\alpha<1$, \begin{equation*}
\left\{\begin{array}{ll}
_CD_t^{\alpha}v(t,x)=\Delta v(t,x),&t>0, \\ \\
v(0,x)=f(x),&
\end{array} \right.
\end{equation*} (considered in \cite[Example 4.13]{Li}) is given by $v(t,x)=\int_0^{t}g_{1-\alpha}(t-s)u(s,x)\,ds,$ for $t>0.$

\subsection{Multiplication families on $C_0(\R^n)$} Let $\{T(t)\}_{t\geq 0}$ be a multiplication semigroup in $\R^n$ generated by $q(x),$ that is, $$T(t)f(x)=e^{tq(x)}f(x), \qquad x\in \R^n, \quad t>0.$$ Some examples are $q(x)=-4\pi^2|x|^2,\,-2\pi|x|,\,-\log(1+4\pi^2|x|^2),$ treated in \cite{Gale}. Then by Theorem \ref{cauchy} (i), the solution of the Riemann-Liouville fractional diffusion problem of order $0<\alpha<1$  \begin{equation*}
\left\{\begin{array}{ll}
_RD_t^{\alpha}u(t,x)=q(x)u(t,x),&t>0, \\ \\
(g_{1-\alpha}*u(\cdot,x))(0)=f(x),&x\in \R^n,
\end{array} \right.
\end{equation*}
is given by $$u(t,x)=\int_0^{\infty}\psi_{\alpha,0}(t,s)e^{sq(x)}f(x)\,ds=t^{\alpha-1}E_{\alpha,\alpha}(q(x)t^{\alpha})f(x),\qquad t>0,\quad x\in \R^n,$$ where we have applied Theorem \ref{propiedades} (iii). Even more, by Theorem \ref{cauchy} (ii), the solution of the Riemann-Liouville fractional diffusion problem of order $0<\alpha,\gamma<1$  \begin{equation*}
\left\{\begin{array}{ll}
_RD_t^{\gamma}u(t,x)=-(-q(x))^{\alpha}u(t,x),&t>0, \\ \\
(g_{1-\gamma}*u(\cdot,x))(0)=f(x),&
\end{array} \right.
\end{equation*}
is given by $$u(t,x)=\int_0^{\infty}\Psi_{\gamma,\alpha}(t,s)e^{sq(x)}f(x)\,ds=t^{\gamma-1}E_{\gamma,\gamma}(-(-q(x))^{\alpha}t^{\gamma})f(x),\qquad t>0,$$ where we have applied Theorem \ref{propiedades} (ii) and (iii).

\subsection*{Conflict of Interests} The authors declare that there is no conflict of interests regarding the publication of this paper.

\subsection*{Acknowledgements} We thank Prof. Carlos Lizama for some comments and nice ideas that have contributed to improve the final version of this publication.

\end{document}